\newtheorem{theorem}{Theorem}[section]
\newtheorem{lemma}[theorem]{Lemma}
\newtheorem{claim}[theorem]{Claim}
\newtheorem{conjecture}[theorem]{Conjecture}
\newtheorem{problem}[theorem]{Problem}
\newtheorem*{ryser}{Ryser's Conjecture}
\title{Intersecting extremal constructions in Ryser's conjecture for $r$-partite hypergraphs}
\author
{
  Abu-Khazneh, Ahmad\\
  \small Department of Mathematics, \\
  \small London School of Economics, \\
  \small London, United Kingdom\\
  \small Email: \texttt{a.abu-khazneh@lse.ac.uk}
  \and
  Pokrovskiy, Alexey\\
  \small Methods for Discrete Structures, \\
  \small Freie Universit\"at, \\ 
  \small Berlin, Germany.\\ 
  \small Email: \texttt{alja123@gmail.com} 
}
\begin{document}

\maketitle

\begin{abstract}
Ryser's Conjecture states that for any $r$-partite $r$-uniform hypergraph the vertex cover number is at most $r-1$ times the matching number.  This conjecture is only known to be true for $r\leq 3$. For intersecting hypergraphs,  Ryser's Conjecture reduces to saying that the edges of every $r$-partite intersecting hypergraph can be covered by $r-1$ vertices. This special case of the conjecture has only been proven for $r \leq 5$. 
  
It is interesting to study hypergraphs which are extremal in Ryser's Conjecture i.e, those hypergraphs for which the vertex cover number is exactly $r-1$ times the matching number. There are very few known constructions of such graphs. For large $r$ the only known constructions come from projective planes and exist only when $r-1$ is a prime power. Mansour, Song and Yuster studied how few edges a hypergraph which is extremal for Ryser's Conjecture can have. They defined $f(r)$ as the minimum integer so that there exist an $r$-partite intersecting hypergraph $\mathcal{H}$ with $\tau({\mathcal{H}}) = r -1$ and with $f(r)$ edges. They showed that $f(3) = 3, f(4) = 6$, $f(5) = 9$, and $12\leq f(6)\leq 15$.

In this paper we focus on the cases when $r=6$ and $7$. We show that $f(6)=13$ improving previous bounds. We also show that $f(7)\leq 22$, giving the first known extremal hypergraphs for the $r=7$ case of Ryser's Conjecture. These results have been obtained independently by Aharoni, Barat, and Wanless.
\end{abstract}

\section{Introduction}

A \textit{hypergraph} consists of a vertex set $V=V(\mathcal{H})$ and a set $E=E(\mathcal{H})$ of edges, where each edge is a nonempty subset of $\mathcal{V}$. A hypergraph is called \textit{$r$-uniform} if all it's edges have the same cardinality $r$, and is called \textit{$r$-partite} if its vertex set can be partitioned into $r$ parts, and every edge contains precisely one vertex from each part. Thus every $r$-partite hypergraph is also $r$-uniform.

A \textit{matching} of a hypergraph $\mathcal{H}$ is a set of pairwise disjoint edges in $\mathcal{H}$, while the \textit{matching number} $\nu(\mathcal{H})$ of $\mathcal{H}$ is the size of a largest matching of $\mathcal{H}$. A \textit{(vertex) cover} of $\mathcal{H}$ is a subset $\mathcal{W} \subset V(\mathcal{H})$ such that every edge of $\mathcal{H}$ contains at least one vertex of $\mathcal{W}$, and the \textit{covering number} $\tau(\mathcal{H})$ of $\mathcal{H}$ is the size of a smallest cover of $\mathcal{H}$.   

A conjecture due to Ryser (that appeared in his student Henderson's thesis, \cite{hend}) relates the covering number and matching number for $r$-partite hypergraphs:\\

\begin{ryser}
  If $\mathcal{H}$ is a $r$-partite hypergraph then $\tau(\mathcal{H}) \le (r -1)\nu(\mathcal{H})$.
\end{ryser}

Setting $r=2$ in Ryser's Conjecture gives K\"{o}nig's Theorem \cite{koi}, which is equivalent to numerous other min-max theorems in graph theory and combinatorics, among them Hall's Theorem \cite{hall}. A hypergraph generalisation of Hall's Theorem was proved by Aharoni and Haxell in \cite{ahha} using topological methods. Using this theorem, Aharoni proved the $r=3$ case  of Ryser's Conjecture \cite{ahro}. The case $r=3$ is the only general case of Ryser's Conjecture proved to date.

An \textit{intersecting hypergraph} $\mathcal{H}$, is a hypergraph in which every two hyperedges share at least one vertex, or equivalently $\nu(\mathcal{H}) = 1$. In the special case of intersecting hypergraphs, Ryser's Conjecture reduces  to saying that every $r$-partite intersecting hypergraph has a cover of size $r-1$.

Ryser's Conjecture for intersecting hypergraphs was proved for $r = 3$ and $4$ in \cite{duch} and \cite{gyra}, while $r=5$ was proved in \cite{duch} and \cite{tuz_un} ($r=2$ is trivial). 

The focus of this paper is on extremal hypergraphs for the intersecting version of Ryser's Conjecture, i.e.\ those $r$-partite hypergraphs for which the cover number is exactly $r-1$. There are very few known constructions of such graphs. For large $r$ the only known extremal constructions come from projective planes and exist only when $r-1$ is a prime power. These constructions are referred to as \textit{truncated projective plane} hypergraphs and are constructed as follows (we assume familiarity with the axioms of the finite projective plane, details of which can be found in \cite{van_lint}). 

Assume $r-1$ is a prime power and consider the projective plane of order $r$ viewed as an $r$-uniform intersecting hypergraph $\mathcal{H}$. Let $v$ be any vertex in $\mathcal{H}$ and let $E_v$ be the set of $r$ hyperedges in $\mathcal{H}$ that contain $v$. Let $\mathcal{H'}$ be the hypergraph formed from $\mathcal{H}$ by removing the vertex $v$ and the hyperedges $E_v$. Since every hyperedge in $\mathcal{H'}$ intersects each hyperedge in $E_v$ exactly once (which follows from the axioms of finite projective planes), $\mathcal{H'}$ can be viewed as an $r$-partite intersecting hypergraph where each partition consists of the vertices of one of the hyperedges in $E_v$ excluding $v$. Since $\mathcal{H'}$ contains $(r-1)^2$ edges and each vertex in $\mathcal{H'}$ also has degree $(r-1)^2$, it can be seen that $\mathcal{H'}$ requires at least $r-1$ vertices to cover its edges. Finally, since the hyperedges of $\mathcal{H'}$ can be covered by the $r-1$ vertices that make up any of the partitions of $\mathcal{H'}$, it can be seen that $\tau(\mathcal{H'}) = r - 1$. Thus $\mathcal{H'}$ is an extremal hypergraph for Ryser's Conjecture, and is what we shall refer to as the truncated projective plane construction.

From the above recipe, it follows that for each finite projective plane of order $q$ there exist a $(q+1)$-partite intersecting extremal hypergraph for Ryser's Conjecture. Also, since a finite projective plane of order $q$ contains $q^2 + q + 1$ lines, the corresponding extremal hypergraph will  contain  $q^2$ hyperedges. Furthermore, since whenever $q - 1$ is a prime power there is a finite projective plane of order $q$, we have an infinite family of $r'$s for which there is an extremal $r$-partite  hypergraph for Ryser's Conjecture.

Our motivation for researching extremal hypergraphs for the intersecting case of Ryser's Conjecture is mainly due to two reasons. The first reason is that as noted by Mansour, Song and Yuster in their study \cite{mans} of such hypergraphs the truncated projective plane construction is not the ``correct'' extremal construction since it contains more hyperedges than necessary. They defined $f(r)$ as the minimum integer so that there exist an $r$-partite intersecting hypergraph $\mathcal{H}$ with $\tau({\mathcal{H}}) \ge r -1$ and with $f(r)$ edges, then they showed that $f(3) = 3, f(4) = 6$, $f(5) = 9$ and that $12 \le f(6) \le 15$, less than the $(r-1)^2$ hyperedges contained in the respective truncated projective plane construction for each of $r= 3, 4, 5 $ and $ 6$. Furthermore, the truncated construction is not defined for all $r$, so it is interesting to find other constructions of extremal hypergraphs.

Another reason to study extremal hypergraphs in the intersecting case, is a recent study by Haxell, Narins, and Szab\'o of extremal hypergraphs in the $r=3$ case of Ryser's Conjecture \cite{HNS}. Using topological methods, they were able to characterize \emph{all} hypergraphs which are extremal for the $r=3$ case of Ryser's Conjecture (not just intersecting ones). Their main result is that for $r=3$, any extremal hypergraph is formed from a vertex-disjoint union of intersecting extremal hypergraphs as well as some ``extra'' edges. It would be interesting to find similar characterizations for $r>3$. An intermediate step towards this is to better understand intersecting extremal graphs.

The first contribution of this paper is that we improve the aforementioned bound $12 \le f(6) \le 15$ proved in \cite{mans} by showing that:\\

\begin{theorem}
  \label{f_6_is_13}
  $f(6) = 13$
\end{theorem}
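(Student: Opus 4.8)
The plan is to prove the two inequalities $f(6)\le 13$ and $f(6)\ge 13$ separately, since the upper bound is essentially a construction while the lower bound is the genuinely hard structural statement. Recall that $f(6)$ is already known to lie in $[12,15]$, so the real content is producing a $13$-edge extremal example and, more substantially, ruling out $12$-edge ones.

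For the upper bound $f(6)\le 13$, I would exhibit an explicit $6$-partite intersecting hypergraph $\mathcal H$ on an explicit vertex set with exactly $13$ edges and verify that $\tau(\mathcal H)=5$. A natural way to search for such an $\mathcal H$ is to start from a known minimum extremal example witnessing $f(5)=9$, adjoin a sixth part, and add a small number of extra edges whose role is to raise the covering number from $4$ to $5$ while keeping the family intersecting; an alternative is a direct small-scale search. Once $\mathcal H$ is fixed, verification splits into three finite checks: that every pair of edges meets, so $\mathcal H$ is intersecting; that some $5$ vertices cover all edges, giving $\tau\le 5$; and that no $4$ vertices cover all $13$ edges, giving $\tau\ge 5$. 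The last check is the only laborious part, but it is finite, either exhaustively over $4$-subsets of the support or by using the intersecting structure to restrict which vertices can lie in a small cover.

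For the lower bound $f(6)\ge 13$, I would argue by contradiction: suppose $\mathcal H$ is a $6$-partite intersecting hypergraph with $\tau(\mathcal H)=5$ and only $12$ edges, chosen edge-minimal with these properties. Minimality makes every edge \emph{critical}: for each edge $e$ there are $4$ vertices covering $E(\mathcal H)\setminus\{e\}$ but not $e$, so this private cover $C_e$ is disjoint from $e$ and is met by each of the other $11$ edges. Pigeonhole on $C_e$ then forces a vertex of degree at least $3$ lying outside $e$, and iterating such estimates yields bounds on the maximum degree and on how the $12$ edges distribute across the six parts. The purpose of these structural lemmas is to collapse the infinitely many a priori configurations down to a finite, manageable list.

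The main obstacle is finishing this lower bound, because the gap between $12$ and $13$ edges is exactly the boundary case: the degree and cover constraints must be pushed until they are tight, and the residual analysis is prone to combinatorial explosion. I would control this by peeling off a maximum-degree vertex $v$, noting that the $12-\deg(v)$ edges avoiding $v$ form a $6$-partite intersecting family with $\tau\ge 4$; combining a lower bound for such families (informed by the value $f(5)=9$ together with the criticality structure above) with the count of edges through $v$ should contradict the assumption of only $12$ edges. Completing this case analysis, very likely with computer verification of the finitely many surviving configurations, is the crux of the whole argument.
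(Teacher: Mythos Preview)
Your overall plan matches the paper's: an explicit $13$-edge construction for $f(6)\le 13$, and for $f(6)\ge 13$ a contradiction argument that branches on the maximum degree $\Delta(\mathcal H)$ of a hypothetical $12$-edge extremal $\mathcal H$, removing a vertex $v$ of maximum degree and analysing the sub-hypergraph $\mathcal H'$ of edges avoiding $v$. The extreme cases $\Delta(\mathcal H)\ge 5$ and $\Delta(\mathcal H)\le 3$ fall to greedy covers and intersection counting respectively, just as you anticipate.

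Where your sketch parts from what actually works is in the pivotal case $\Delta(\mathcal H)=4$. Invoking $f(5)=9$ is a red herring: after removing $v$ the $8$-edge family $\mathcal H'$ is still $6$-partite, not $5$-partite, so $f(5)$ gives no direct leverage, and your criticality observation on its own does not close the gap either. What the paper proves instead is a stand-alone structural lemma: any $6$-partite intersecting hypergraph with exactly $8$ edges and $\tau=4$ must have $\Delta=3$, exactly one degree-$3$ vertex in each of the six parts, and a tightly constrained degree profile overall (one of two explicit types). With this in hand one counts, part by part, the intersections between the four edges through $v$ and the eight edges of $\mathcal H'$; the structural lemma caps this total at $31$, whereas $4\cdot 8=32$ are required for $\mathcal H$ to be intersecting. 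No computer verification is used anywhere in the lower bound---the entire argument is hand intersection-counting---so your closing expectation of machine assistance is unnecessary here (though a computer \emph{is} used to find and check the $13$-edge example for the upper bound).
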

This theorem was also proved independently by Aharoni, Barat, and Wanless~\cite{Wanless}.

The second contribution of this paper revolves around the value of $f(7)$ for which no bound is currently known. The truncated projective plane construction doesn't help in this case, since it has been proved that finite projective planes of order $6$ do not exist.

Since $6$ is not a prime power, there exist no finite projective planes of order $6$ that are constructed via a vector space of finite fields. However, this is not sufficient to rule out other constructions of finite projective planes of order $6$. The first published proof that no finite projective planes of order $6$ exist is due to Tarry \cite{tarry} in 1901 through exhaustive enumeration, in his proof related to Euler's conjecture on the non-existence of Graeco-Latin squares of order $6$. The non-existence of finite projective planes of order $6$ also follows from Bruck-Ryser theorem\footnote{We note that there is an extension of Bruck-Ryser Theorem for symmetric block designs known as the Bruck-Ryser-Chowla Theorem} \cite{bruck} which states that:\\

\begin{theorem}
  If a finite projective plane of order $q$ exists and $q$ is congruent to $1$ or $2$ (mod $4$), then $q$ must be the sum of two squares.
\end{theorem}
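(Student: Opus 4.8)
The plan is to reduce the statement to a fact about quadratic forms and then to the classical two-squares theorem. A projective plane of order $q$ is the same thing as a symmetric $2$-$(v,q+1,1)$ design with $v = q^2+q+1$ points and lines; let $A$ be its point--line incidence matrix, a $v\times v$ matrix of $0$'s and $1$'s. The incidence axioms translate exactly into the matrix identity $A A^{T} = q I + J$, where $I$ is the identity and $J$ the all-ones matrix. (One checks the eigenvalues of $qI+J$ are $(q+1)^2$ once and $q$ with multiplicity $v-1$, so $\det A \neq 0$; this rank information is used implicitly below.)

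First I would extract the fundamental quadratic identity. Writing $Y = A^{T}X$ for a vector of indeterminates $X = (X_1,\dots,X_v)$, each $Y_i$ is an integer linear form in the $X_j$, and
\[
\sum_{i=1}^{v} Y_i^2 \;=\; X^{T} A A^{T} X \;=\; q\sum_{i=1}^{v} X_i^2 + \Bigl(\sum_{i=1}^{v} X_i\Bigr)^2 .
\]
The hypothesis $q \equiv 1$ or $2 \pmod 4$ enters through the count $v = q^2+q+1 \equiv 3 \pmod 4$. I would therefore introduce one auxiliary variable $X_{v+1}$ and add $q X_{v+1}^2$ to both sides, so that the coefficient-$q$ sum now runs over $v+1 \equiv 0 \pmod 4$ variables.

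The engine of the argument is Lagrange's four-square theorem together with Euler's four-square multiplicativity identity. Writing $q = c_1^2+c_2^2+c_3^2+c_4^2$ and applying Euler's identity to each consecutive block of four variables, I obtain rational linear forms $Z_1,\dots,Z_{v+1}$, related to the $X$'s by an invertible rational matrix, with $q\sum_{i=1}^{v+1} X_i^2 = \sum_{i=1}^{v+1} Z_i^2$. Substituting this into the identity and treating the $Z_i$ as the new free variables, every $Y_i$, together with $X_{v+1}$ and $w := \sum_{i\le v} X_i$, becomes a rational linear form in $Z_1,\dots,Z_{v+1}$. Now comes the descent: for $i=1,\dots,v$ in turn, I would impose $Y_i = \pm Z_i$, choosing the sign so that the resulting linear equation can be solved for $Z_i$ in terms of the later variables, and thereby cancel $Y_i^2$ against $Z_i^2$. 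After $v$ such steps only $Z_{v+1}$ remains free, and the surviving relation collapses to $q\,X_{v+1}^2 = Z_{v+1}^2 + w^2$ with $X_{v+1}, w$ rational multiples of $Z_{v+1}$. This yields $q = \alpha^2 + \beta^2$ for rationals $\alpha,\beta$, and the proof finishes by invoking the elementary fact that a positive integer which is a sum of two rational squares is already a sum of two integer squares.

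The step I expect to be most delicate is the descent: one must verify at each stage that a sign can be chosen making the coefficient of $Z_i$ nonzero (so that $Z_i$ is genuinely eliminable), and keep careful track of the fact that the whole manipulation takes place over $\mathbb{Q}$ rather than $\mathbb{Z}$. Conceptually this elementary argument is the Hasse--Minkowski statement that the identity form of rank $v$ and the form $qI+J$ are rationally equivalent, the only nontrivial obstruction being the Hilbert symbol $(q,-1)_p$ at each prime $p$; that these all vanish is precisely the condition that $q$ be a norm from $\mathbb{Q}(i)$, i.e.\ a sum of two squares. I would present the elementary descent but remark on this reformulation, since it explains transparently why the parity of $v$ (equivalently $q \bmod 4$) is exactly what governs whether an obstruction appears.
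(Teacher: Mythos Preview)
The paper does not prove this theorem at all: it is quoted as the classical Bruck--Ryser theorem, with a citation, purely to explain why no projective plane of order $6$ exists and hence why the truncated-projective-plane construction gives no extremal $7$-partite hypergraph. So there is no ``paper's proof'' to compare against.

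Your argument is the standard Bruck--Ryser proof and is essentially correct. A couple of small points worth tightening. First, the $Z_i$ produced by Euler's four-square identity are \emph{integer} linear forms in the $X_j$; it is only after inverting (the block matrices satisfy $CC^{T}=qI$, so $C^{-1}$ has rational entries) that everything becomes rational, which is what you need for the descent. Second, at the very end you reach $q\,X_{v+1}^{2}=Z_{v+1}^{2}+w^{2}$ with $X_{v+1}=\alpha Z_{v+1}$ and $w=\beta Z_{v+1}$, and to divide by $X_{v+1}^{2}$ you need $\alpha\neq 0$; this is automatic, since $\alpha=0$ would force $(1+\beta^{2})Z_{v+1}^{2}=0$, impossible over $\mathbb{Q}$. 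You flag the sign-choice step in the descent as the delicate point, and that is right: the observation that at least one of $Y_i=Z_i$ or $Y_i=-Z_i$ yields a nonzero coefficient on $Z_i$ is exactly what is needed. The closing Hasse--Minkowski remark is a nice gloss but not required for the paper's purposes.
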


Our second contribution in this paper presents the first known upperbound for $f(7)$.\\

\begin{theorem}
  \label{r7_ext}
  $f(7) \leq 22$
\end{theorem}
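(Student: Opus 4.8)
The statement is an upper bound witnessed by a construction, so the plan is to exhibit an explicit $7$-partite intersecting hypergraph $\mathcal{H}$ with $22$ edges and $\tau(\mathcal{H}) = 6$, and then verify its three defining properties. I would encode a candidate as a set of $22$ vectors in $\{1,\dots,6\}^7$, reading coordinate $i$ as the choice of vertex in the $i$-th part; a vertex is then a pair (part, value), two edges intersect exactly when their vectors agree in some coordinate, and the degree of a vertex is the number of vectors taking that value in that coordinate. The design target is to keep every coordinate-value pair used at most $4$ times (so every vertex has degree at most $4$), to use exactly $6$ values in each coordinate, and to force all $\binom{22}{2}$ pairs of vectors to agree somewhere.

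Granting such a configuration, the two bounds on $\tau$ become clean. For $\tau(\mathcal{H}) \le 6$: since each part contains exactly $6$ vertices and every edge meets every part in one vertex, any single part is a cover of size $6$. For $\tau(\mathcal{H}) \ge 6$: if every vertex has degree at most $4$, then any set of $5$ vertices is incident to at most $5 \cdot 4 = 20 < 22$ edges, so no $5$ vertices cover all of $\mathcal{H}$. Together with the intersecting property this yields $\tau(\mathcal{H}) = 6$, hence $f(7) \le 22$. The intersecting check and the degree check are both finite verifications over the explicit edge list.

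The real work, and the main obstacle, is producing the $22$ vectors themselves: the intersecting requirement is a global constraint coupling all pairs, and it pulls against the degree-at-most-$4$ requirement, since forcing many edges to agree in a coordinate inflates the frequency of the shared value. A first-moment count shows the regime is tight: the $22 \cdot 7 = 154$ incidences spread over $7$ parts force each part to total $22$ in degree, so with maximum degree $4$ each part needs at least $\lceil 22/4 \rceil = 6$ vertices, and exactly $6$ is precisely what the upper-bound argument wants. Thus the construction is forced to be almost regular, behaving locally like a truncated projective plane even though no plane of order $6$ exists.

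To locate such a near-regular intersecting family I would not search $\{1,\dots,6\}^7$ blindly but instead seed the search with structure: either extend a known small extremal hypergraph (for instance the $13$-edge witness for $f(6)=13$, adjoining a seventh coordinate together with the extra edges needed to restore the intersecting and covering properties), or impose an automorphism group so as to collapse the search into orbits of edges, and then complete or repair the candidate by computer. Once a valid $22$-edge family is found, the verification steps above certify $f(7) \le 22$; the discovery of the family, rather than its verification, is where the difficulty lies.
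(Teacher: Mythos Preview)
Your proposal is a plan rather than a proof: the entire content of the theorem is the existence of a witnessing hypergraph, and you do not supply one. You correctly identify that, \emph{given} a $7$-partite intersecting family of $22$ edges with every part of size $6$ and maximum degree at most $4$, the bounds $\tau\le 6$ (any part is a cover) and $\tau\ge 6$ (five vertices touch at most $20<22$ edges) are immediate. But the conditional is doing all the work, and you explicitly defer ``producing the $22$ vectors themselves'' to a search you only sketch. As it stands there is nothing to verify.

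By contrast, the paper's proof \emph{is} the explicit edge list: it writes down $22$ specific edges and asserts (with computer verification) that the resulting hypergraph is intersecting with $\tau=6$. It is worth noting that the paper's witness does \emph{not} fit your template: it has a part with $7$ vertices and contains vertices of degree $6$ (for instance $(1,1)$ and $(1,3)$ each lie in six edges), so the slick $5\cdot 4<22$ argument for $\tau\ge 6$ is unavailable and the lower bound genuinely needs a case check or enumeration. Your degree-$\le 4$ target is an elegant idea for trivialising the verification, but it imposes a strictly stronger constraint than what is needed, and the paper's example shows that the ``natural'' computer-found witness need not satisfy it. Insisting on this extra structure could make the search harder or even infeasible, so if you pursue this route you should be prepared to relax the degree cap and fall back on direct verification of $\tau\ge 6$.
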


To prove Theorem \ref{r7_ext}, we present a $7$-partite hypergraph with $22$ edges that has a covering number equal to $6$, which was generated by the aid of a computer search. 

In  \cite{Wanless}, Aharoni, Barat, and Wanless also found a $7$-partite intersecting extremal hypergraph for Ryser's Conjecture with $17$ edges. In fact, they were able to show that $f(7) = 17$.

\section{The value of $f(6)$}

To settle the case $f(6)$ we will first show that $f(6) > 12$, by proving that $f(6) \neq 12$ and then combine it with the result $f(6) > 11$ established in \cite{mans}. We will then present a $6$-partite intersecting extremal hypergraph with $13$ edges, which shows that $f(6) = 13$.

For a given hypergraph $\mathcal{H}$ and a vertex $v \in V(\mathcal{H})$, we let $E(v)$ denote the set ${\{e \in E(\mathcal{H}): v \in e\}}$, and we denote the degree of $v$ by ${d(v) = |E(v)|}$. We also use the notation $\Delta(\mathcal{H})$ to denote the maximum degree over all vertices of $\mathcal{H}$. Finally, for two distinct vertices $v$ and $w$ in $\mathcal{H}$, the \emph{co-degree} of $v$ and $w$, denoted by $c(v, w)$, is defined as $|E(v) \cap E(w)|$.

In the rest of the paper we will make use of the following trivial bound on the covering number of an intersecting hypergraph: if $\mathcal{H}$ is an intersecting hypergraph then $\tau(\mathcal{H}) \le \frac{E(\mathcal{H})}{2}$. This bound follows since a cover of size $\frac{E(\mathcal{H})}{2}$ can be established via the greedy algorithm given that every two hyperedges in an intersecting hypergraph intersect in at least one vertex. We will call any cover obtained this way a \emph{greedy} cover of the hypergraph.

% Formal definition of Delta above: ${\Delta(\mathcal{H}) = \smash{\displaystyle\max_{v \in V(\mathcal{H})}}d(v)}$
% Taken out because looks awkward inline as it almost touches the line above, didn't find how to include it and fix the line spacing

\subsection{Proof that $f(6) > 12$}

The strategy we adapt to prove that $f(6) \neq 12$, is first to assume that $\mathcal{H}$ is a $6$-partite extremal hypergraph that contains exactly $12$ edges and then showing via a case-by-case analysis that all possible values of $\Delta(\mathcal{H})$ lead to a contradiction. When $\Delta(\mathcal{H})$ is large it can be shown that a cover $\mathcal{C}$ of $\mathcal{H}$ can be formed such that $|\mathcal{C}| < 5$, contradicting the extremality of $\mathcal{H}$. When $\Delta(\mathcal{H})$ is small it can be shown that some of the hyperedges of $\mathcal{H}$ don't intersect each other contradicting the fact that $\mathcal{H}$ is intersecting.

The case $\Delta(\mathcal{H}) = 4$ turns out to be more difficult to deal with than the other cases, and to settle it we will require some facts concerning the degree structure of intersecting $6$-partite hypergraphs with $8$ hyperedges and a covering number equal to $4$. We will start by proving these facts before presenting the proof of $f(6) > 12$.\\

\begin{lemma}
  \label{fact_8_edges}
  If $\mathcal{H'}$ is an intersecting $6$-partite hypergraph with $8$ hyperedges and $\tau(\mathcal{H'}) = 4$, then $\mathcal{H'}$ contains a vertex of degree $3$ in each partition, and there exists two hyperedges in $\mathcal{H'}$ such that they share at least two vertices of degree $3$ in common.
\end{lemma}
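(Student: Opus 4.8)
The plan is to first pin down the degree structure forced by the hypotheses and then run a case analysis on the per-partition degree sequences. Write $a$, $b$, $c$ for the numbers of vertices of $\mathcal{H'}$ of degree $3$, $2$, $1$ respectively; a first observation is that no higher degrees occur. Indeed, if some vertex $v$ had $d(v) \ge 4$, then deleting the $\ge 4$ edges through $v$ leaves at most $4$ edges, which still form an intersecting family and so are covered by at most $2$ vertices via the greedy bound; together with $v$ this gives a cover of size at most $3$, contradicting $\tau(\mathcal{H'}) = 4$. Hence $\Delta(\mathcal{H'}) \le 3$. Next, since each edge meets every partition in exactly one vertex, the occupied vertices of any single partition form a cover, so each partition has at least $4$ occupied vertices. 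As the degrees of a partition sum to $8$ and each is at most $3$, the degree multiset of every partition is an integer partition of $8$ into at least four parts, each at most $3$ — a short explicit list whose extreme case is $(3,3,1,1)$.

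The main engine is the identity $\sum_{v} \binom{d(v)}{2} = \sum_{\{e,e'\}} |e \cap e'| \ge \binom{8}{2} = 28$, where the inequality uses that $\mathcal{H'}$ is intersecting. Splitting the left-hand side over the six partitions, each partition contributes at most $6$ (attained only by $(3,3,1,1)$), while a partition with no degree-$3$ vertex contributes at most $4$ (attained by $(2,2,2,2)$). Combining this with the incidence count $3a + 2b + c = 48$ and with $a+b+c \ge 24$ (at least $4$ occupied vertices in each of six partitions, giving $2a+b \le 24$), I would extract the bound $a \ge 4$ and record the restricted list of possible per-partition multisets for use below.

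For the first assertion — a degree-$3$ vertex in \emph{every} partition — I would argue by contradiction. A partition $P$ with no degree-$3$ vertex must have degree multiset $(2,2,2,2)$, splitting the eight edges into four pairs $G_1, \dots, G_4$ glued at the four occupied vertices of $P$. The intersecting condition then forces all $24$ cross-pair intersections to occur among the other five partitions, and I would exploit this structure — together with the per-partition contribution bound and an attempt to merge two of the pairs at a common vertex outside $P$ — to produce a cover of size $3$, contradicting $\tau(\mathcal{H'}) = 4$. For the second assertion, I would suppose no two edges share two degree-$3$ vertices; then each of the $\binom{8}{2}$ pairs absorbs at most one of the $3a$ incidences of the form (degree-$3$ vertex, pair of edges through it), so $3a \le 28$, i.e. $a \le 9$, and I would contradict this by pushing the lower bound on $a$ above $9$ using the refined degree-sequence list from the previous paragraph.

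The hard part will be the case analysis underlying the first assertion. The counting inequality alone only rules out a minority of degree-$3$-free partitions (it gives at most four of the six, far short of what is needed), so eliminating the $(2,2,2,2)$ pattern in a single partition genuinely requires exploiting the intersecting structure of the other five partitions to build an explicit cover of size $3$; the same refined book-keeping of degree-$3$ vertices across partitions is what is needed to force $a$ large enough to close the second assertion.
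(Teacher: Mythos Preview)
Your plan for the second assertion cannot work. You aim to reach a contradiction by proving $a \ge 10$, where $a$ is the number of degree-$3$ vertices; but in fact the hypotheses force $a = 6$, so the inequality $3a \le 28$ you derive is perfectly consistent and yields no contradiction. To see $a \le 6$ quickly: if $u, v$ are distinct degree-$3$ vertices lying in no common edge, then together they cover $6$ of the $8$ edges, and the remaining two edges intersect in some vertex $w$, giving a cover $\{u,v,w\}$ of size $3$ --- contradicting $\tau(\mathcal{H'}) = 4$. Hence any two degree-$3$ vertices share an edge, and since two vertices in the same partition of a $6$-partite hypergraph never share an edge, all degree-$3$ vertices lie in distinct partitions, whence $a \le 6$. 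This observation is exactly what the paper exploits: it then proves $a \ge 6$ by a careful case analysis on the auxiliary hypergraph $\mathcal{K}$ obtained by restricting each edge to its degree-$3$ vertices (ruling out $|V(\mathcal{K})| \in \{3,4,5\}$), concluding $a = 6$ with one degree-$3$ vertex per partition. The ``two edges sharing two degree-$3$ vertices'' then comes from further structural analysis of $\mathcal{K}$, not from a large value of $a$.

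Your plan for the first assertion is also incomplete. The claim that a partition with no degree-$3$ vertex ``must have degree multiset $(2,2,2,2)$'' is unjustified --- $(2,2,2,1,1)$, $(2,2,1,1,1,1)$, etc., are equally compatible with the constraints you have listed --- and the remainder of that paragraph (``exploit this structure \ldots\ to produce a cover of size $3$'') is a hope rather than an argument. You correctly note that the inequality $\sum_v \binom{d(v)}{2} \ge 28$ permits several degree-$3$-free partitions, so the crux is structural. But the clean structural fact you are missing is precisely the one above: any two degree-$3$ vertices share an edge. This single line replaces the entire $(2,2,2,2)$ analysis and reduces the first assertion to proving $a \ge 6$, which is where the genuine case work resides.
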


\begin{proof}
  For the rest of proof let $\mathcal{H'}$ be as in the statement of the Lemma. We can assume $\Delta(\mathcal H')\leq 3$, otherwise we can find a cover $\mathcal{C}$ of $\mathcal{H'}$ with $|\mathcal{C}| \leq 3$ by including in $\mathcal{C}$ a vertex of degree more than $3$, and greedily covering the remaining uncovered hyperedges. We will proceed via a series of claims.

  \begin{claim}
    \label{7Edge}
    Every $6$-partite, intersecting hypergraph $\mathcal{G}$ with $7$ edges and satisfying $\Delta(\mathcal{G})\leq 3$ has at least $2$ vertices of degree $3$.
  \end{claim}
  \begin{proof}
    Suppose, for the sake of contradiction that $\mathcal{G}$ contains at most one vertex of degree~$3$. Let $v$ be this vertex (if it exists).  Since $\mathcal{G}$ is intersecting, there are $\binom {7}{2}=21$ intersections between the edges.  Three of these intersections can occur at $v$, and the rest must all occur at distinct vertices of degree $2$.  Therefore there must be at least 19 vertices in $\mathcal{G'}$ of degree~$\geq 2$.
    By the Pigeonhole Principle some partition of $\mathcal{G}$ has at least $4$ vertices of degree at least $2$.  Since $\mathcal{G}$  has $7$ edges, some edge must pass through two vertices in this partition contradicting $\mathcal{G}$ being $6$-partite.
  \end{proof}

  \begin{claim}
    \label{1Deg3}
    Every edge in $\mathcal{H'}$ contains a vertex of degree $3$.
  \end{claim}

  \begin{proof}
    If $E$ is an edge of $\mathcal{H'}$, then it has $6$ vertices and must intersect the $7$ other edges of $\mathcal{H'}$.  By the Pigeonhole Principle, one of the vertices of $E$ must have degree $3$.
  \end{proof}

  \begin{claim}
    \label{2Deg3}
    For any pair of vertices $u$ and $v$ of degree $3$ in $\mathcal{H'}$, $c(u, v) \geq 1$. 
  \end{claim}

  \begin{proof}
    Suppose that there are two vertices $u, v \in V(\mathcal{H'})$ of degree $3$ which are not contained in a common edge.  Then, since $|E(\mathcal{H'})|=8$, there are only two edges in $\mathcal{H'}$ which do not contain either $u$ or $v$.  These two edges must intersect in some vertex $w$.  This gives a cover $\{u,v,w\}$ of $\mathcal{H'}$ of order $3$, contradicting our assumption that $\tau(\mathcal{H'}) > 3$.
  \end{proof}

    Let $\mathcal{K}$ be the non-uniform hypergraph formed from $\mathcal{H'}$ by deleting the vertices with degree less than $3$.  Formally $V(\mathcal{K})$ is the set of vertices of $\mathcal{H'}$ with degree $3$ and $E(\mathcal{K})=\{A\cap V(\mathcal{K}): A\in \mathcal{H'}\}$.  We allow $\mathcal{K}$ to have repeated edges in the case when $A\cap V(\mathcal{K})= A'\cap V(\mathcal{K})$ for  distinct edges  $A, A'\in \mathcal{H'}$.

  Notice that by Claim \ref{1Deg3}, we have that $|\mathcal{K}|=|\mathcal{H'}|=8$ and the edges in $\mathcal{H}$ have order at least $1$.  Moreover, from the definition of $\mathcal{K}$, we have that $\mathcal{K}$ satisfies the conclusion of Claim \ref{2Deg3} and $\mathcal{K}$ is 3-regular.\\

  \begin{claim}\label{SmallEdgesInK}
    Let $A$ be an edge of $\mathcal{K}$.  We have that $|A|\leq |V(\mathcal{K})|-2$.
  \end{claim}

  \begin{proof}
    By the definition of $\mathcal{K}$, there is an edge $A'\in \mathcal{H'}$ satisfying $A=A'\cap V(\mathcal{K})$.  Let $\mathcal{H''}$ be the hypergraph formed from $\mathcal{H'}$ by removing the edge $A'$.  It is easy to check that $\mathcal{H''}$ satisfies all the conditions of Claim~\ref{7Edge}, and hence contains two vertices $u$ and $v$ with degree $3$.  Since $\Delta(\mathcal H')\leq 3$, the vertices $u$ and $v$ could not be contained in $A'$ (or $A$) giving the result.
  \end{proof}

  \begin{claim}
    \label{Total3}
    $|V(\mathcal{K})| = 6$.
  \end{claim}

  \begin{proof}
    Since  all edges in $\mathcal{K}$ contain at least one vertex, Claim~\ref{SmallEdgesInK} implies that  $|V(\mathcal{K})|\geq 3$.    
    
    Suppose that $|V(\mathcal{K})|= 3$. By Claim~\ref{SmallEdgesInK}, we have that $|E|\leq 1$ for every edge $E\in \mathcal{K}$.  This contradicts $\mathcal{K}$ satisfying Claim~\ref{2Deg3}.

    Suppose that $|V(\mathcal{K})|= 4$. As in the previous case, Claim~\ref{SmallEdgesInK} implies that we have $|E|\leq 2$ for every edge $E\in \mathcal K$.  Then, Claim~\ref{2Deg3} implies that for every pair of distinct vertices $u, v\in V(\mathcal{K})$ the edge $\{u,v\}$ is in $\mathcal{K}$.  Since $\mathcal{K}$ is $3$-regular, there cannot be any other edges in $\mathcal{K}$, which contradicts $\mathcal{K}=8$.

    Suppose that $|V(\mathcal{K})|= 5$. Claim \ref{SmallEdgesInK} implies that we have $|E|\leq 3$ for every edge $E\in \mathcal(K)$. Let $e_i$ be the number of edges $E\in \mathcal{K}$ satisfying $|E|=i$. Notice that since $|E|\leq 3$ for every edge $E\in \mathcal{K}$, we have that $e_i=0$ for $i>3$.

    Since $\mathcal{K}$ has $5$ vertices and $8$ edges and is $3$-regular, we have the following.

    \begin{align}
      e_1+e_2+e_3&=|\mathcal{K}|=8, \label{EdgeSum}\\
      3e_3+2e_2+e_1&=3|V(\mathcal{K})|=15. \label{VertexSum}
    \end{align}
    
    Combining  (\ref{EdgeSum}) and (\ref{VertexSum}), we obtain the following

    \begin{align}
      e_3&=e_1-1, \label{e11}\\
      e_2&=9-2e_1,\label{e12}
    \end{align}
    
    There are five cases, depending on the value of $e_1$.
    
    \begin{itemize}
    \item Suppose that $e_1\leq 1$. Then (\ref{e11}), together with $e_3\geq 0$ implies that in fact $e_1=1$ and hence from (\ref{e11}) and (\ref{e12}) we obtain  $e_2=7$ and $e_3=0$.   This contradicts Claim~\ref{2Deg3} which implies that $e_2+3e_3\geq \binom{5}{2}=10$.
    \item Suppose that $e_1= 2$.  Then we have $e_3= 1$ and $e_2 = 5$.  Again, this contradicts $e_2+3e_3\geq \binom{5}{2}=10$.
    \item Suppose that $e_1= 3$.  Then we have $e_3= 2$ and $e_2 = 3$.  Let $\{v_1\}$, $\{v_2\}$, and $\{v_3\}$ be the three edges of $\mathcal{K}$ of order $1$.  Notice that by  Claim~\ref{2Deg3} and $\Delta(\mathcal{K})\leq 3$, for each $i$, the vertex  $v_i$ must each be contained in two edges $E$, $F$ of order $3$ satisfying $E\cap F=\{v_i\}$.  This leads to a contradiction since there are only two edges in $\mathcal{K}$ of order 3.
    \item Suppose that $e_1= 4$.  Then we have $e_3= 3$ and $e_2 = 1$.  Let $\{v_1, v_2\}$ be the edge of order 2 in $\mathcal{K}$.  Since $|V(\mathcal{K})|=5$ and there are four edges of $\mathcal{K}$ of order 1, either $\{v_1\}$ or $\{v_2\}$ must be an edge of $\mathcal{K}$. 
There can only be one more edge going through this vertex, and by Claim~\ref{2Deg3}, it would also have to pass through the remaining three vertices $v_3, v_4$, and $v_5$. This contradicts $|E|\leq 3$ holding for every edge in $\mathcal K$.
    \item Suppose that $e_1\geq 4$.  In this case (\ref{e12}) gives $|e_2|<0$ which is impossible.
    \end{itemize}
  \end{proof}

  \begin{claim}
    \label{pairs}
    The hypergraph $\mathcal{K}$ contains two edges $E$ and $F$ such that $E\cap F\geq 2$.

  \end{claim}
  \begin{proof}

    Claim \ref{SmallEdgesInK} implies that we have $|E|\leq 4$ for any edge $E\in \mathcal{K}$.

    Suppose that we have an edge $E$ of order $4$ in $\mathcal{K}$.  Let $E=\{v_1,v_2,v_3,v_4\}$.  Since $\mathcal{K}$ is $3$-regular each vertex $v_i$ is contained in two edges $F_i^1$ and $F_i^2$ other than $E$.  Since $\mathcal{K}$ has 8 edges, $F_i^a=F_j^b$ for some $i\neq j$.  Therefore we have $\{v_i,v_j\}\subseteq F_i^a\cap E$ implying the claim.

    Suppose that all edges $E \in \mathcal{K}$ satisfy $|E|\leq 3$.  If a vertex $v\in V(\mathcal{K})$ is contained in three edges of order $3$, then two of these edges have intersection greater than $2$, proving the claim.  Therefore we have that any $v\in V(\mathcal{K})$  is contained in at most two edges of order $3$.  By Claim~\ref{2Deg3}, every vertex $v\in V(\mathcal{K})$ is then contained in exactly two edges of order $3$ and one edge of order $2$.  The number of edges of order $2$ in $\mathcal{K}$ must therefore be $|V(\mathcal{K})|/2=3$ and the number of edges of order $3$ in $\mathcal{K}$ must be $2|V(\mathcal{K})|/3=4$. This contradicts $\mathcal{K}$ having $8$ edges.
  \end{proof}

  Now Claim~\ref{Total3} proves that $\mathcal{H'}$ contains six vertices of degree $3$, and Claim \ref{2Deg3} shows that these vertices are all in different partitions of $\mathcal{H}$. Claim~\ref{pairs} shows that there exist at least two hyperedges in $\mathcal{H'}$ such that they share at least two vertices of degree $3$ in common. Together these facts prove Lemma \ref{fact_8_edges}.
  \end{proof}

  Using Lemma~\ref{fact_8_edges} we are able to determine precisely all possible degree structures of intersecting $6$-partite hypergraphs with $8$ hyperedges and a covering number equal to $4$.\\

  \begin{lemma}
  \label{degs}
  If $\mathcal{H'}$ is an intersecting $6$-partite hypergraph with $8$ hyperedges and $\tau(\mathcal{H'}) = 4$, then $\mathcal{H'}$ has one of the following degree structure:
  \begin{itemize}
  \item In all $6$ partitions of $\mathcal{H'}$, each partition contains one vertex of degree $3$, two vertices of degree $2$ and one vertex of degree $1$, or
  \item In $5$ partitions of $\mathcal{H'}$ it contains one vertex of degree $3$, two vertices of degree $2$ and one vertex of degree $1$, and in the $6$th partition it contains one vertex of degree $3$, one vertex of degree $2$, and four vertices of degree $1$.
  \end{itemize}

\end{lemma}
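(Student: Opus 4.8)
The plan is to reduce the statement to a short double-counting argument, using everything already established in the proof of Lemma~\ref{fact_8_edges}. First I would record the facts that are now available: we may assume $\Delta(\mathcal{H'})\le 3$; by Claim~\ref{Total3} there are exactly six vertices of degree $3$; and by Claim~\ref{2Deg3} any two of them lie in a common edge, so no two of them share a partition. Six degree-$3$ vertices lying pairwise in distinct partitions among the six partitions forces exactly one degree-$3$ vertex per partition, and every remaining vertex then has degree $1$ or $2$ since $\Delta(\mathcal{H'})\le 3$.

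Next I would exploit that, as $\mathcal{H'}$ is $6$-partite, each edge meets every partition in exactly one vertex, so within any fixed partition the degrees of its vertices sum to $|E(\mathcal{H'})|=8$. Subtracting the unique degree-$3$ vertex, the remaining vertices of that partition have degrees in $\{1,2\}$ summing to $5$. The only ways to write $5$ as such a sum are $2+2+1$, $2+1+1+1$ and $1+1+1+1+1$, giving three possible ``partition types'' which I will call $A$, $B$, $C$ (having respectively two, one and zero vertices of degree $2$, besides the degree-$3$ vertex). The two alternatives in the Lemma are exactly ``all six partitions of type $A$'' and ``five of type $A$ and one of type $B$'', so it suffices to show these are the only possibilities. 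Writing $a,b,c$ for the number of partitions of each type, we have $a+b+c=6$, and the total number of degree-$2$ vertices is $m_2=2a+b$.

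The finishing step is a count of intersecting pairs of edges. Since every vertex of degree $d$ lies in $\binom{d}{2}$ edge-pairs, we have $\sum_{v}\binom{d(v)}{2}=\sum_{\{e,e'\}}|e\cap e'|$, where the right-hand sum runs over all $\binom{8}{2}=28$ pairs of edges. As $\mathcal{H'}$ is intersecting each term is at least $1$, and by Claim~\ref{pairs} at least one pair meets in two (degree-$3$) vertices, so the right-hand side is at least $29$. The left-hand side equals $6\binom{3}{2}+m_2\binom{2}{2}=18+m_2$, whence $m_2\ge 11$, i.e.\ $2a+b\ge 11$. Combined with $a+b+c=6$ and $a,b,c\ge 0$, a short check shows the only integer solutions are $(a,b,c)=(6,0,0)$ and $(5,1,0)$, which are precisely the two cases of the Lemma.

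I expect the only delicate point to be the sharpening from $28$ to $29$: the bare intersecting bound would give only $m_2\ge 10$, which would also admit the spurious profiles $(a,b,c)=(4,2,0)$ and $(5,0,1)$, and it is exactly the extra intersection guaranteed by Claim~\ref{pairs} that rules these out. Everything else is bookkeeping, so the real work has already been carried out in establishing Claims~\ref{2Deg3}, \ref{Total3} and \ref{pairs}.
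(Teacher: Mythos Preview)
Your proposal is correct and follows essentially the same approach as the paper: both arguments count pairwise edge intersections against $\binom{8}{2}=28$, use the six degree-$3$ vertices (one per partition) to account for $18$ of them, invoke the shared pair of degree-$3$ vertices from Lemma~\ref{fact_8_edges} to sharpen the bound by one, and conclude that at least $11$ degree-$2$ vertices are needed. Your presentation via the identity $\sum_v\binom{d(v)}{2}=\sum_{\{e,e'\}}|e\cap e'|$ and the explicit enumeration of partition types $A,B,C$ is somewhat cleaner, but the substance is the same.
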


\begin{proof}

  Since $\mathcal{H'}$ is an intersecting hypergraph that contains $8$ hyperedges, the number of intersections between the hyperedges of $\mathcal{H'}$ is at least $\binom{8}{2} = 28$. From Lemma~\ref{fact_8_edges} we also know that $\Delta(\mathcal{H'}) = 3$ and that $\mathcal{H'}$ contains six vertices of degree $3$. Since each vertex of degree $3$ contributes $3$ intersections between the hyperedges of $\mathcal{H'}$, the maximum number of intersection contributed by the vertices of degree $3$ is $18$. 

  However, by Lemma~\ref{fact_8_edges}, we know that at least one pair of hyperedges have in common at least two vertices of degree $3$, therefore we can reduce the previous bound by $1$ to account for this duplication, which makes the maximum number of intersection contributed by the vertices of degree $3$ equal to $17$. Hence, the vertices of degree $2$ in $\mathcal{H'}$ need to account for at least $28 - 17 = 11$ of the intersections in $\mathcal{H'}$.

  Since $|E(\mathcal{H'})| = 8$, and each partition of $\mathcal{H'}$ contains a vertex of degree $3$, the maximum number of degree $2$ vertices that $\mathcal{H'}$ can contain in each partition is two. Therefore if $\mathcal{H'}$ contains $11$ vertices of degree $2$ then by the Pigeonhole Principle in at least five partitions of $\mathcal{H'}$ it will contain two vertices of degree $2$, and in the remaining partition we must have either one vertex of degree $2$ or two vertices of degree $2$.

  If one of the partitions of $\mathcal{H'}$ contains exactly one vertex of degree $2$, then apart from the vertex of degree $3$ the remaining vertices in that partition will all have degree $1$. These two possibilities prove the degree scheme stated in Lemma~\ref{degs}.
\end{proof}

\begin{lemma}
  \label{f_6_is_not_12}
  $f(6) \neq 12$
\end{lemma}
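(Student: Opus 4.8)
The plan is to assume for contradiction that $\mathcal{H}$ is a $6$-partite intersecting hypergraph with exactly $12$ edges and $\tau(\mathcal{H})\geq 5$, and to produce from this a cover of size at most $4$ (or a direct violation of the intersecting property). Two tools drive everything. First, for any vertex $v$ the set $\{v\}$ together with a greedy cover of $\mathcal{H}-E(v)$ is a cover, so $\tau(\mathcal{H})\leq 1+\lceil(12-d(v))/2\rceil$; since $\tau(\mathcal{H})\geq 5$ this forces $d(v)\leq 5$ for every $v$, i.e. $\Delta(\mathcal{H})\leq 5$. Second, since $\mathcal{H}$ is intersecting, $\sum_v\binom{d(v)}{2}=\sum_{\{e,e'\}}|e\cap e'|\geq\binom{12}{2}=66$, while $\sum_v d(v)=72$. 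If $\Delta(\mathcal{H})\leq 2$ the left-hand side is at most the number of degree-$2$ vertices, namely at most $36<66$; hence $\Delta(\mathcal{H})\geq 3$. This reduces the problem to the three cases $\Delta(\mathcal{H})\in\{3,4,5\}$.

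For $\Delta(\mathcal{H})=5$, delete a vertex $v$ of degree $5$; the remainder $\mathcal{H}''$ is $6$-partite, intersecting and has $7$ edges, and it suffices to show $\tau(\mathcal{H}'')\leq 3$, which yields a cover of $\mathcal{H}$ of size at most $4$. If instead $\tau(\mathcal{H}'')=4$, then applying the same deletion bound inside $\mathcal{H}''$ gives $d(u)\leq 7-5=2$ for every vertex $u$; but then $\sum\binom{d(u)}{2}\leq 21=\binom{7}{2}$ forces every vertex to have degree exactly $2$ and every pair of edges to meet exactly once, which is impossible because each partition of $\mathcal{H}''$ would then have degree-sum $7$, an odd number that cannot be a sum of $2$'s. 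For $\Delta(\mathcal{H})=3$, observe that any partition consisting of exactly four vertices of degree $3$ is itself a cover of size $4$; since no $4$-cover exists, every partition must avoid the profile $(3,3,3,3)$, and every other degree multiset with entries at most $3$ summing to $12$ has at least two entries below $3$. Hence there are at least $12$ vertices of degree less than $3$. But the intersection count gives $n_1+n_2=72-(3n_3+n_2)\leq 72-66=6$, a contradiction.

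The case $\Delta(\mathcal{H})=4$ is the crux, and is where Lemmas~\ref{fact_8_edges} and~\ref{degs} enter. Deleting a degree-$4$ vertex $v$ leaves a $6$-partite intersecting hypergraph $\mathcal{H}'$ with $8$ edges; extremality gives $\tau(\mathcal{H}')\geq 4$ and the greedy bound gives $\tau(\mathcal{H}')\leq 4$, so $\tau(\mathcal{H}')=4$ and Lemma~\ref{degs} pins down its degree sequence exactly, with one degree-$3$ vertex $w_i$ in each partition (and, by Lemma~\ref{fact_8_edges}, two edges of $\mathcal{H}'$ sharing two of the $w_i$). The difficulty is that the naive cover ``$v$ together with a cover of $\mathcal{H}'$'' has size $1+4=5$ and cannot be improved along this route precisely because $\tau(\mathcal{H}')=4$; one must instead assemble a global $4$-cover of $\mathcal{H}$ directly. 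I would combine three ingredients: the rigidity of the degree sequence of $\mathcal{H}'$ from Lemma~\ref{degs}; the observation, exactly as in the previous two cases, that every partition of $\mathcal{H}$ has at least five vertices (a partition with at most four vertices is a cover of size at most $4$); and a careful analysis of how the four edges of $E(v)$ thread through the partitions of the now-structured $\mathcal{H}'$, using $\Delta(\mathcal{H})=4$ to bound $|E(v)\cap E(w_i)|\leq 1$. The saving then comes from the two edges of $\mathcal{H}'$ that share two degree-$3$ vertices, since covering both of them costs less than covering two generic edges.

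Each of the three cases contradicts $\tau(\mathcal{H})\geq 5$, so $f(6)\neq 12$; combined with the bound $f(6)>11$ of \cite{mans} this yields $f(6)>12$. I expect the main obstacle to be the bookkeeping in the $\Delta(\mathcal{H})=4$ case: unlike the $\Delta\in\{3,5\}$ cases, which close by a single counting or parity argument, here one must convert the local degree data supplied by Lemma~\ref{degs} into an explicit choice of four vertices covering all twelve edges, and this requires tracking the interaction between the four edges through $v$ and the constrained remainder $\mathcal{H}'$.
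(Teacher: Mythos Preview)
Your handling of the cases $\Delta(\mathcal{H})\in\{3,5\}$ is correct and in fact slightly slicker than the paper's: for $\Delta=5$ you replace the paper's ``at most $18<21$ intersections'' count by a parity obstruction, and for $\Delta=3$ you replace the paper's per-partition cap of $10$ intersections by the neat inequality $n_1+n_2=72-(n_2+3n_3)\le 6$ against the lower bound $n_1+n_2\ge 12$ coming from the excluded profile $(3,3,3,3)$. These are genuine alternatives and both go through.

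The gap is in the $\Delta(\mathcal{H})=4$ case. What you have written there is a plan, not an argument: you propose to ``assemble a global $4$-cover of $\mathcal{H}$ directly'' using the degree rigidity of $\mathcal{H}'$ from Lemma~\ref{degs}, the bound $|E(v)\cap E(w_i)|\le 1$, and the pair of edges of $\mathcal{H}'$ sharing two degree-$3$ vertices. None of this is carried out, and it is not clear that the ``saving'' you allude to actually materialises into four explicit covering vertices. The paper does \emph{not} build a $4$-cover here; instead it derives a contradiction with the intersecting property via a counting argument on the \emph{other} side. Writing $\mathcal{H}''$ for the four edges through $v$, each edge of $\mathcal{H}''$ must meet each of the eight edges of $\mathcal{H}'$, so there are at least $4\cdot 8=32$ $\mathcal{H}''$--$\mathcal{H}'$ incidences, all confined to the five partitions not containing $v$. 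Using Lemma~\ref{degs} (and $\Delta(\mathcal{H})=4$, which caps how many edges of $\mathcal{H}''$ may reuse a high-degree vertex of $\mathcal{H}'$), the paper shows each Type~A partition can host at most $6$ such incidences and the (at most one) Type~B partition at most $7$, for a total of at most $4\cdot 6+7=31<32$. This is the missing idea: instead of searching for a cover, bound the cross-intersections between $E(v)$ and $\mathcal{H}'$ partition by partition and show they fall one short.
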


\begin{proof}
  Let $\mathcal{H}$ be a $6$-partite intersecting hypergraph containing $12$ edges and assume that $\tau(\mathcal{H}) = 5$. We will proceed by showing that all possible values of $\Delta(\mathcal{H})$ lead to a contradiction.

  \begin{description}

  \item[Case $\Delta(\mathcal{H}) \ge 6$:] Assume that $\Delta(\mathcal{H}) \ge 6$, and let $v \in V(\mathcal{H})$ be vertex such that $d(v) \ge 6$, finally denote by $\mathcal{H'} \subset E(\mathcal{H})$ the set of hyperedges that don't contain $v$, which forms an intersecting $6$-partite sub-hypergraph of $\mathcal{H}$.
    
    We have $|E(\mathcal{H'})| \le 6$ and therefore we can greedily cover $\mathcal{H'}$ with a cover $\mathcal{C}$ such that $|\mathcal{C}| \le 3$. Therefore the set $\mathcal{C'} = \mathcal{C} \cup \{v\}$ covers $\mathcal{H}$, and $|\mathcal{C}'| < 5$ which contradicts $\mathcal{H}$ being extremal.

  \item[Case $\Delta(\mathcal{H}) = 5$:] Assume that $\Delta(\mathcal{H}) = 5$ and let $v \in V(\mathcal{H})$ such that $d(v) = 5$, and define the intersecting $6$-partite sub-hypergraph $\mathcal{H'} \subset E(\mathcal{H})$ to consist of the $7$ hyperedges in $E(\mathcal{H})$ that don't contain $v$.
    
    If $\mathcal{H}'$ has a cover $C$ such that $|C| \le 3$, then the cover $\mathcal{C'} = \mathcal{C} \cup \{v\}$ covers $\mathcal{H}$ and $|\mathcal{C'}| < 5$ which contradicts $\mathcal{H}$ being extremal. We can therefore assume that $\tau(\mathcal{H'}) = 4$.
    
    If any $3$ or more hyperedges of $\mathcal{H}'$ intersect in a vertex $v'$, then we can greedily cover the remaining hyperedges of $\mathcal{H}'$ by $2$ vertices or less, contradicting $\tau(\mathcal{H'}) = 4$. Therefore, we can suppose that $\Delta(\mathcal{H'}) \le 2$.

    However, if $\Delta(\mathcal{H'}) \le 2$ then the maximum number of intersections that can occur in a partition of $\mathcal{H'}$ is $3$ intersections, which occurs when a partition of $\mathcal{H'}$ contains three vertices of degree $2$. It follows that the maximum number of intersections in all of $\mathcal{H'}$ is equal to $18$. However, we require at least $\binom{7}{2} = 21$ intersections between hyperedges of $\mathcal{H'}$, for $\mathcal{H'}$ to be an intersecting hypergraph, which leads to a contradiction. 

  \item[Case $\Delta(\mathcal{H}) = 4$:] Assume that $\Delta(\mathcal{H}) = 4$ and let $v \in V(\mathcal{H})$ be a vertex such that $d(v) = 4$. Let $\mathcal{H'}$ be the intersecting $6$-partite sub-hypergraph $\mathcal{H'} \subset E(\mathcal{H})$ consisting of the $8$ hyperedges in $E(\mathcal{H})$ that don't contain $v$.

    If we can cover $\mathcal{H'}$ by a cover $\mathcal{C}$ such that $|\mathcal{C}| \le 3$, then the set $\mathcal{C'} = \mathcal{C} \cup \{v\}$ covers the whole of $\mathcal{H}$, and since $|\mathcal{C'}| < 5$ this will contradict $\mathcal{H}$ being extremal. Therefore can assume that $\tau(\mathcal{H'}) = 4$.

        Since $\tau(\mathcal{H'}) = 4$, then as in the proof of Lemma~\ref{fact_8_edges}, we must have $\Delta(\mathcal{H'}) \le 3$ (since otherwise, we could cover $4$ edges by one vertex, and the remaining edges greedily by $2$ vertices.). 

    Denote by $\mathcal{H''}$ the set of four hyperedge that contain the vertex $v'$ of degree $4$ (i.e. the hyperedges not in $\mathcal{H'}$). Since $\mathcal{H}$ is an intersecting hypergraph, the number of intersections in $\mathcal{H}$ between hyperedges in $\mathcal{H''}$ and hyperedges in $\mathcal{H'}$ is equal to $4 \cdot 8 = 32$, and these intersections need to occur in $5$ partitions of $\mathcal{H}$; since in the partition that contain $v'$ the hyperedges in $\mathcal{H''}$ are disjoint from the hyperedges in $\mathcal{H'}$.

    From Lemma \ref{degs} we know that $\mathcal{H'}$ can have two types of degree schemes in its partitions, which we will refer to as \textit{Type A} and \textit{Type B}:

    \begin{description}
    \item[Type A:] Partitions that have \textit{Type A} contain one vertex of degree $3$, two vertices of degree $2$ and one vertex of degree $1$,
    \item[Type B:] Partitions that have \textit{Type B} contain one vertex of degree $3$, one vertex of degree $2$ and four vertices of degree $1$. 
    \end{description}

    We will now establish the maximum number of intersections possible that can occur between the hyperedges of $\mathcal{H''}$ and the hyperedges of $\mathcal{H'}$ in each of the two types of degree schemes and show that this is less the minimum required for $\mathcal{H}$ to be intersecting. \\
    
    %We will now establish the maximum number of intersections possible that can occur between the hyperedges of $\mathcal{H''}$ and the hyperedges of $\mathcal{H'}$ in each of the two types of degree schemes. We will be able to use these bounds to show that the total number of possible intersections in $\mathcal{H}$ between hyperedges in $\mathcal{H'}$ and hyperedges in $\mathcal{H''}$ is less than the minimum required for $\mathcal{H}$ to be intersecting, which allows us to obtain a contradiction. \\

    \begin{claim}
      Let $S$ be a partition of $\mathcal{H'}$  of \textit{Type A}. Then the maximum number of intersection in $\mathcal{H}$ that can occur between hyperedges in $\mathcal{H''}$ and hyperedges in $\mathcal{H'}$ within $S$ is at most $6$. 
    \end{claim}

    \begin{proof}
      If all the hyperedges in $\mathcal{H''}$ contained a vertex from $S$, then $S$ would cover all of $\mathcal{H}$ and $|S| = 4$, contradicting the fact that $\mathcal{H}$ is extremal. Thus at most three hyperedges in $\mathcal{H''}$ can contain a vertex from $S$.

      Let $w'$ be the vertex in $S$ that has degree $3$. We note that if more than one hyperedge from $\mathcal{H''}$ contained $w'$, then $w'$ will have a degree in $\mathcal{H}$ that exceeds $4$, which contradicts $\Delta(\mathcal{H}) = 4$. Therefore at most one hyperedge of $\mathcal{H''}$ can contain $w'$. 

      Suppose that at most two hyperedges of $\mathcal{H''}$ contain vertices from $S$, since at most one of them can contain a vertex of degree $3$, this case trivially satisfies the claim.

      Thus the only remaining case that needs to be checked is when all three hyperedges of $\mathcal{H''}$ contain a vertex from $S$.

      Let $e_i$ be the number of hyperedges in $\mathcal{H''}$ that contain a vertex in $S$ of degree $i$ in $\mathcal{H'}$. From the above we have: 

      \setcounter{equation}{0}
      \begin{align}
        e_1 + e_2 + e_3 \le 3 \label{12and3}\\
        e_1 \le 3 \\
        e_2 \le 3 \\
        e_3 \le 1
      \end{align}

      Suppose that exactly three hyperedges of $\mathcal{H''}$ contain vertices from $S$, and one of the hyperedges in $\mathcal{H''}$ contains $w'$. Let $e$ and $e'$ denote the remaining two hyperedges of $\mathcal{H'}$ that contain a vertex in $S$. It can be seen that $e$ and $e'$ contain the vertices in $S$ of degree $2$ in $\mathcal{H'}$ in three possible ways, and we first show two of these possibilities lead to a contradiction:

      \begin{itemize}
      \item Each of $e$ and $e'$ contain one of the vertices in $S$ of degree $2$ in $\mathcal{H'}$. In this case, $w'$ and the two vertices in $S$ of degree $2$ will cover $4 + 3 + 3 = 10$ hyperedges of $\mathcal{H}$, and since we can cover the remaining two hyperedges of $\mathcal{H}$ by a vertex, this will contradict $\tau(\mathcal{H}) = 5$.
        
      \item Hyperedges $e$ and $e'$ contain the same vertex in $S$ of degree $2$ in $\mathcal{H'}$. In this case the aforementioned vertex and $w'$ cover $4 + 4 = 8$ hyperedges of $\mathcal{H}$. Since, we can greedily cover the remaining $4$ hyperedges of $\mathcal{H}$ with $2$ vertices, this will allow us to cover $\mathcal{H}$ with $4$ vertices, contradicting the fact that $\tau(\mathcal{H}) = 5$.
        
      \item At most one of the hyperedges $e$ and $e'$ contains a vertex in $S$ of degree $2$ in $\mathcal{H'}$.
      \end{itemize}

      From the above case analysis, it follows that if one of the hyperedges in $\mathcal{H''}$ contained the vertex in $S$ of degree $3$ in $\mathcal{H'}$, then at most one hyperedge from $\mathcal{H''}$ contains a vertex in $S$ of degree $2$ in $\mathcal{H'}$. We represent this as the inequality:

      \begin{align}
        e_2 + 2e_3 \le 3 \label{2and3}
      \end{align}

      The number of intersections between hyperedges in $\mathcal{H''}$ and vertices in $S$, can be represented as the inequality $e_1 + 2e_2 + 3e_3$. By combining the inequalities (\ref{12and3}) and (\ref{2and3}) we obtain the following bound on the number of intersections:

      \begin{align}
        e_1 + 2e_2 + 3e_3 \le 6
      \end{align}

      Which proves that the maximum number of intersections between the set $\mathcal{H''}$ and partitions with degree scheme of \textit{Type A} is equal to $6$.

    \end{proof}

    \begin{claim}
      Let $S$ be a partition of $\mathcal{H}$  of \textit{Type B}. Then the maximum number of intersection in $\mathcal{H}$ that can occur between hyperedges in $\mathcal{H''}$ and hyperedges in $\mathcal{H'}$ within $S$ is at most $7$. 
    \end{claim}

    \begin{proof}

      %The difference in this case from the previous one is that partitions of $\mathcal{H'}$ with degree scheme of \textit{Type B} have more vertices than partitions with degree scheme of \textit{Type A}, which we will see allows the maximum intersection between hyperedges in $\mathcal{H''}$ and hyperedges in $\mathcal{H'}$ to increase to $7$. 

      Let $w'$ be the vertex in $S$ of degree $3$ in $\mathcal{H'}$, and let $w''$ be the vertex in $S$ of degree $2$ in $\mathcal{H'}$. We note that the no more than one hyperedge of $\mathcal{H''}$ can contain $w'$, otherwise $w'$ will have a degree that exceeds $4$ in $\mathcal{H}$ which contradicts $\Delta(\mathcal{H}) = 4$. Similarly, $\Delta(\mathcal{H}) = 4$ implies that the maximum number of hyperedges in $\mathcal{H''}$ that can contain $w''$ in $S$ is equal to $2$.

      Let $e_i$ be the number of hyperedges in $\mathcal{H''}$ that contain a vertex in $S$ of degree $i$ in $\mathcal{H'}$. From the above we have: 

      \setcounter{equation}{0}
      \begin{align}
        e_1 + e_2 + e_3 \le 4 \label{B_12and3}\\
        e_1 \le 4 \\
        e_2 \le 2 \\
        e_3 \le 1
      \end{align}

      If a hyperedge in $\mathcal{H''}$ contains $w'$, and more than one hyperedge in $\mathcal{H''}$ contain $w''$, then $w'$ and $w''$ cover $8$ or more hyperedges of $\mathcal{H}$, and therefore the remaining hyperedges can be greedily covered by two vertices or less, contradicting $\tau(\mathcal{H}) = 5$. Thus if one of the hyperedges in $\mathcal{H''}$ contains $w'$, then at most one other hyperedge of $E'$ can contain $w''$, or in inequality form:

      \begin{align}
        e_2 + 2e_3 \le 3 \label{B_2and3}
      \end{align}

      We have that the expression $e_1 + 2e_2 + 3e_3$ represents number of intersection between $\mathcal{H''}$ and $\mathcal(H')$, which we can bound by combining the inequalities (\ref{B_12and3}) and (\ref{B_2and3}) we obtain:

      \begin{align}
        e_1 + 2e_2 + 3e_3 \le 7
      \end{align}

      Which proves that the maximum number of intersections between the set of vertices $\mathcal{H''}$ and $\mathcal{H'}$ in a partition with degree scheme of Type B is equal to $7$.

    \end{proof}
    
    Since there is only one partition with degree scheme of \textit{Type B}, and all intersections between $\mathcal{H''}$ and $\mathcal{H'}$ occur in five partitions of $\mathcal{H'}$ then the maximum number of intersection that can occur between $\mathcal{H''}$ and $\mathcal{H'}$ is equal to $7 + 6\cdot4 = 31$, which is one short of the $32$ intersections required to make $\mathcal{H}$ intersecting, a contradiction.
    
  \item[Case $\Delta(\mathcal{H}) \le 3 $:] Since $\mathcal{H}$ is extremal each partition needs to have at least 5 vertices (otherwise the vertices of partition with less than $5$ vertices will form a cover of $\mathcal{H}$ contradicting $\tau(H) = 5$), therefore each partition can have at most three vertices with degree $3$.

    Hence the maximum number of intersections between the hyperedges that can occur in a particular partition of $\mathcal{H}$ is when the partition consists of three vertices with degree $3$, along with another vertex of degree $2$ and another vertex of degree $1$, in which case the maximum number of intersections per partition would be equal to $10$. It follows that the maximum total number of intersections that can occur in all the partitions of $\mathcal{H}$ is $60$. 

    However, if $\mathcal{H}$ is an intersecting hypergraph with $12$ edges then it will need to have $\binom{12}{2} = 66$ intersections. Therefore a hypergraph with $\Delta(\mathcal{H}) \le 3$ can't be extremal.
  \end{description}
\end{proof}

\subsection{An example showing $f(6) = 13$}
    
    In this section we present a $6$-partite intersecting hypergraph $\mathcal{H}$ such that $\tau(\mathcal{H}) = 5$. All partitions of $\mathcal{H}$ except the first one contain $5$ vertices, while the first partition contains $6$ vertices. We denote the hyperedges of $\mathcal{H}$ by $E_i$ for $1 \le i \le 13$. We will use the notation $(i,j)$ to denote the $j$-th vertex in the $i$-th partition of $\mathcal{H}$. The hyperedges of $\mathcal{H}$ are:

    \begin{center}
      \doublespacing
      {\small
      $E_1 =\{(1,1), (2,4), (3,4), (4,5), (5,3), (6,5)\},$\hspace{.05\textwidth}$E_2 =\{(1,2), (2,5), (3,2), (4,5), (5,5), (6,3)\},$\\
      $E_3 =\{(1,3), (2,4), (3,5), (4,3), (5,4), (6,3)\},$\hspace{.05\textwidth}$E_4 =\{(1,4), (2,1), (3,5), (4,4), (5,5), (6,5)\},$\\
      $E_5 =\{(1,4), (2,5), (3,4), (4,2), (5,4), (6,4)\},$\hspace{.05\textwidth}$E_6 =\{(1,5), (2,2), (3,5), (4,5), (5,1), (6,4)\},$\\
      $E_7 =\{(1,5), (2,5), (3,1), (4,3), (5,2), (6,5)\},$\hspace{.05\textwidth}$E_8 =\{(1,5), (2,4), (3,3), (4,2), (5,5), (6,2)\},$\\
      $E_9 =\{(1,5), (2,3), (3,4), (4,4), (5,3), (6,3)\},$\hspace{.05\textwidth}$E_{10} =\{(1,6), (2,2), (3,4), (4,3), (5,5), (6,1)\},$\\
      $E_{11} =\{(1,6), (2,4), (3,2), (4,4), (5,2), (6,4)\},$\hspace{.05\textwidth}$E_{12} =\{(1,6), (2,5), (3,5), (4,1), (5,3), (6,2)\},$\\
      $E_{13} =\{(1,6), (2,3), (3,3), (4,5), (5,4), (6,5)\}$\\            
      }
    \end{center}

    In the appendix we provide another representation of $\mathcal{H}$ which presents it in terms of its degree structure. It is easy to check that $\mathcal{H}$ is intersecting, and by noting that five of the six partitions of $\mathcal{H}$ contain $5$ vertices we see that $\tau(\mathcal{H}) \leq 5$. \\
    
    \begin{lemma}
      \label{6_is_extremal}
      $\tau(\mathcal{H}) = 5$
    \end{lemma}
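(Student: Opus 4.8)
The inequality $\tau(\mathcal{H})\le 5$ is already noted, so it remains to prove $\tau(\mathcal{H})\ge 5$; equivalently, that no four vertices cover all of $E_1,\dots,E_{13}$. I would suppose for contradiction that $C$ is a cover with $|C|=4$. Since a cover is exactly a set of vertices whose edge-sets $E(v)$ together contain every $E_i$, I would first record the degree structure directly from the edge list: $\Delta(\mathcal{H})=4$, partition $1$ is the unique partition containing two vertices of degree $4$ (namely $(1,5)$ with $E(v)=\{E_6,E_7,E_8,E_9\}$ and $(1,6)$ with $\{E_{10},E_{11},E_{12},E_{13}\}$), and every vertex has degree at most $4$. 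In particular $\sum_{v\in C} d(v)\le 16$, while $\sum_{v\in C} d(v)=\sum_i |E_i\cap C|\ge 13$ because $C$ is a cover, so the arithmetic slack is small.

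The key structural input, which I would isolate as a claim and verify partition-by-partition, is that \emph{every} vertex of $\mathcal{H}$ lies in at most two of the five edges $E_1,\dots,E_5$. Writing $A=\{E_1,\dots,E_5\}$ and $B=\{E_6,\dots,E_{13}\}$, this forces covering $A$ to use at least $\lceil 5/2\rceil=3$ vertices of $C$, while covering the eight edges of $B$ uses at least $\lceil 8/4\rceil=2$ vertices of $C$. Letting $C_A,C_B\subseteq C$ be the vertices meeting $A$, respectively $B$, we get $|C_A|\ge 3$ and $|C_B|\ge 2$, hence $|C_A\cap C_B|\ge 3+2-4=1$: at least one vertex of $C$ must simultaneously cover an edge of $A$ and an edge of $B$, and at most one vertex of $C$ avoids $A$ altogether.

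The plan is then to finish by a finite case analysis driven by how the three or four vertices of $C_A$ are distributed. For each vertex $v$ with $E(v)\cap A\ne\emptyset$ I would tabulate the explicit set $E(v)\cap A$ together with its complement $E(v)\cap B$; a degree-$4$ vertex meeting $A$ contributes exactly two $A$-edges and two $B$-edges (for instance $(4,5)$ gives $A$-edges $\{E_1,E_2\}$ and $B$-edges $\{E_6,E_{13}\}$), while lower-degree vertices meeting $A$ supply fewer. One then enumerates the few ways three or four such vertices can cover all of $A$, and for each configuration checks that the $B$-edges they supply, together with those of the at most one remaining vertex of $C$, cannot exhaust $B$. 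Because $\sum_{v\in C} d(v)\le 16$ must meet a demand of $13$ under the extra constraint $\sum_{v\in C}|E(v)\cap A|\ge 5$, each branch collapses quickly to a concrete edge of $B$ left uncovered, yielding the contradiction.

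The main obstacle is purely the bookkeeping of this enumeration: without the ``at most two in $A$'' claim one faces an unmanageable search over the $\binom{31}{4}$ vertex quadruples, whereas that claim pins down exactly which vertices can belong to $C$ and forces the tight $3{+}2$ overlap pattern, reducing the verification to a short though fiddly check. I would therefore spend most of the effort making the claim precise and organizing the cases so that in every branch a specific edge of $B$ is exhibited as uncovered.
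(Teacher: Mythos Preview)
Your approach is valid in outline but takes a different route from the paper's, and there is one factual slip and one underestimate of the work involved.

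First, the slip: partition $1$ is \emph{not} the unique partition with two degree-$4$ vertices. Partitions $1$, $2$, and $3$ each contain two degree-$4$ vertices (namely $(1,5),(1,6)$; $(2,4),(2,5)$; $(3,4),(3,5)$), while partitions $4$, $5$, $6$ each contain one. This error is peripheral to your actual argument, which rests on the ``at most two in $A$'' claim, but you should correct the description.

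On the comparison: the paper's proof is organised around the degree-$4$ vertices, not around $A$. It identifies three $2$-regular linear sub-hypergraphs $S_1=\{E_1,\dots,E_5\}$, $S_2$, $S_3$, each with cover number $3$, and uses them to show that a size-$4$ cover cannot contain \emph{any} degree-$4$ vertex; the conclusion then follows immediately since four vertices of degree $\le 3$ cover at most $12$ edges. Your key observation that every vertex meets $A$ in at most two edges is exactly the statement $\tau(S_1)=3$, so you are using one of the paper's three sub-hypergraphs; the difference is that you then propose to enumerate the ways three (or four) vertices can cover $A$ and for each check the residual on $B$, whereas the paper leverages all of $S_1,S_2,S_3$ to dispose of the nine degree-$4$ vertices first and thereby avoids that enumeration.

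On feasibility: your enumeration is larger than you suggest. Every one of the ten pairs in $\binom{A}{2}$ occurs as $E(v)\cap A$ for exactly one vertex $v$, so already the $2{+}2{+}2$ coverings of $A$ by three such vertices give $30$ triples, and the $2{+}2{+}1$ and $|C_A|=4$ branches add more, each requiring a check against the remaining vertex and the eight edges of $B$. This is doable but is not ``short''; if you want a handwritten proof, borrowing the paper's idea of also using $S_2$ and $S_3$ to eliminate degree-$4$ vertices first will cut the casework dramatically.
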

      
    It could be verified using a computer search that $\mathcal{H}$ cannot be covered by less than five vertices by enumerating all possible subset of $V(\mathcal{H})$ consisting of four vertices and checking if they cover $\mathcal{H}$. Since $|V(\mathcal{H})| = 31$ this can be executed very quickly on a standard desktop computer.

    However by making some observations on the degree and intersection structure of $\mathcal{H}$ we are able to present a proof in the appendix that $\tau(\mathcal{H}) > 4$ by checking far fewer cases in comparison to the $31 \choose 4$ cases checked by the total enumeration approach.

    Lemma~\ref{6_is_extremal} allows us to complete the prove of Theorem~\ref{f_6_is_13}.

    \begin{proof}[Proof of Theorem~\ref{f_6_is_13}]
      From \cite{mans} we know that $f(6) > 11$, and by Theorem~\ref{f_6_is_not_12} we know that $f(6) \neq 12$. Therefore, by Lemma~\ref{6_is_extremal} we have that $f(6)$ must be equal to $13$.
    \end{proof}

\section{The case of $f(7)$}

By the aid of a computer program we were able to generate a $7$-partite hypergraph $\mathcal{H'}$ that contains $22$ hyperedges and has a covering number of $6$ vertices. 

Again, using the notation $(i,j)$ to denote the $j$-th vertex in the $i$-th partition of $\mathcal{H'}$. Using this notation the hyperedges of $\mathcal{H'}$ are

\begin{center}

  \doublespacing
  {\small

      $E_1  =\{ (1 , 1),  ( 2 , 1 ),  ( 3 , 1 ),  ( 4 , 1 ),  ( 5 , 1 ),  ( 6 , 1 ),  ( 7 , 1 ) \},$  $E_2  =\{ (1 , 1) , ( 2 , 2 ),  ( 3 , 2 ),  ( 4 , 2 ),  ( 5 , 2 ),  ( 6 , 3 ),  ( 7 , 3 )\},$\\ 

      $E_3  =\{ (1 , 1 ),  ( 2 , 3 ),  ( 3 , 3 ),  ( 4 , 3 ),  ( 5 , 3 ),  ( 6 , 4 ),  ( 7 , 4 ) \},$ \hspace{.05\textwidth} $E_4  =\{ (1 , 1),  ( 2 , 4 ),  ( 3 , 4 ),  ( 4 , 4 ),  ( 5 , 4 ),  ( 6 , 5 ),  ( 7 , 5 ) \},$ \\ 

      $E_5  =\{ ( 1 , 2 ),  ( 2 , 1 ),  ( 3 , 2 ),  ( 4 , 3 ),  ( 5 , 4 ),  ( 6 , 6 ),  ( 7 , 6 ) \},$\hspace{.05\textwidth}$E_6  =\{ ( 1 , 3 ),  ( 2 , 1 ),  ( 3 , 2 ),  ( 4 , 5 ),  ( 5 , 5 ),  ( 6 , 4 ),  ( 7 , 5 ) \},$\\ 

$E_7  =\{ ( 1 , 5 ),  ( 2 , 3 ),  ( 3 , 2 ),  ( 4 , 6 ),  ( 5 , 1 ),  ( 6 , 5 ),  ( 7 , 2 )\},$\hspace{.05\textwidth}$E_8  =\{ ( 1 , 4 ),  ( 2 , 2 ),  ( 3 , 6 ),  ( 4 , 1 ),  ( 5 , 4 ),  ( 6 , 4 ),  ( 7 , 2 ) \},$\\ 

$E_9  =\{ ( 1 , 3 ),  ( 2 , 5 ),  ( 3 , 3 ),  ( 4 , 1 ),  ( 5 , 2 ),  ( 6 , 5 ),  ( 7 , 6 ) \},$\hspace{.05\textwidth} $E_{10}  =\{ ( 1 , 3 ),  ( 2 , 6 ),  ( 3 , 4 ),  ( 4 , 3 ),  ( 5 , 2 ),  ( 6 , 1 ),  ( 7 , 2 ) \},$
\\ 

$E_{11}  =\{ ( 1 , 6 ),  ( 2 , 2 ),  ( 3 , 1 ),  ( 4 , 3 ),  ( 5 , 5 ),  ( 6 , 5 ),  ( 7 , 1 )\},$\hspace{.05\textwidth}$E_{12}  =\{ ( 1 , 3 ),  ( 2 , 3 ),  ( 3 , 5 ),  ( 4 , 2 ),  ( 5 , 4 ),  ( 6 , 2 ),  ( 7 , 1 )\},$\\ 

$E_{13}  =\{ ( 1 , 5 ),  ( 2 , 3 ),  ( 3 , 1 ),  ( 4 , 4 ),  ( 5 , 2 ),  ( 6 , 4 ),  ( 7 , 6 )\},$\hspace{.05\textwidth}$E_{14}  =\{ ( 1 , 1 ),  ( 2 , 6 ),  ( 3 , 6 ),  ( 4 , 6 ),  ( 5 , 5 ),  ( 6 , 2 ),  ( 7 , 6 )\},$\\ 

$E_{15}  =\{ ( 1 , 2 ),  ( 2 , 3 ),  (3 , 4 ),  ( 4 , 1 ), ( 5 , 5 ),  ( 6 , 3 ),  ( 7 , 7 ),  \},$\hspace{.05\textwidth}$E_{16}  =\{ ( 1 , 4 ),  ( 2 , 1 ),  ( 3 , 4 ),  ( 4 , 2 ),  ( 5 , 3 ),  ( 6 , 5 ),  ( 7 , 6 )\},$\\ 

$E_{17}  =\{ (1 , 2 ),  ( 2 , 5 ),  ( 3 , 4 ),  ( 4 , 6 ),  ( 5 , 2 ),  ( 6 , 4 ),  ( 7 , 1 ) \},$\hspace{.05\textwidth}$E_{18}  =\{ ( 1 , 3 ),  ( 2 , 6 ),  ( 3 , 4 ),  ( 4 , 3 ),  ( 5 , 1 ),  ( 6 , 4 ),  ( 7 , 3 )\},$\\ 

$E_{19}  =\{ ( 1 , 3 ),  ( 2 , 1 ),  ( 3 , 1 ),  ( 4 , 6 ),  ( 5 , 4 ),  ( 6 , 3 ),  ( 7 , 4 )\},$\hspace{.05\textwidth}$E_{20} =\{ ( 1 , 4 ),  ( 2 , 3 ),  ( 3 , 1 ),  ( 4 , 3 ),  ( 5 , 2 ),  ( 6 , 2 ),  ( 7 , 5 )\},$\\ 

$E_{21}  =\{ ( 1 , 1 ),  ( 2 , 3 ),  ( 3 , 1 ),  ( 4 , 3 ),  ( 5 , 6 ),  ( 6 , 4 ),  ( 7 , 6 ) \},$\hspace{.05\textwidth}$E_{22}  =\{ ( 1 , 4 ),  ( 2 , 6 ),  ( 3 , 2 ),  ( 4 , 1 ),  ( 5 , 4 ),  ( 6 , 4 ),  ( 7 , 1 )\},$\\ 
}
\end{center}

The fact that $\mathcal{H'}$ has a covering number of $6$ can be checked quickly using a computer program by total enumeration.\\ 

\begin{lemma}
  \label{tau_6}
  $\tau(\mathcal{H'}) = 6$
\end{lemma}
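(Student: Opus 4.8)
The plan is to prove the two inequalities $\tau(\mathcal{H'})\le 6$ and $\tau(\mathcal{H'})\ge 6$ separately, since the upper bound is immediate and all the work lies in the lower bound.

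For the upper bound, I would use the fact that in a $7$-partite hypergraph every edge meets each of the seven partitions in exactly one vertex, so the vertex set of any single partition is a cover. Inspecting the edge list, six of the seven partitions (all except the seventh, which carries the extra vertex $(7,7)$ appearing in $E_{15}$) consist of exactly the six vertices indexed $1,\dots,6$. Taking any such partition as a cover gives $\tau(\mathcal{H'})\le 6$ at once.

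The substance of the lemma is the lower bound $\tau(\mathcal{H'})>5$, i.e.\ that no five vertices meet all $22$ edges. The most direct route, and the one this construction is designed for, is total enumeration: the vertex set has $6\cdot 6+7=43$ vertices, so there are only $\binom{43}{5}\approx 10^{6}$ candidate five-element subsets, and for each one checks in constant time whether it meets every one of the $22$ edges. This is trivially fast and is what certifies $\tau(\mathcal{H'})\ge 6$. Should one instead want a proof inspecting far fewer cases, in the spirit of the appendix argument for Lemma~\ref{6_is_extremal}, I would argue integrally by branching on high-degree vertices. First compute the degree sequence; one finds several large degrees, e.g.\ $(2,3)$ has degree $7$ and $(1,1),(1,3)$ have degree $6$. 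For a hypothetical cover $C$ with $|C|=5$ and any vertex $u$: if $u\in C$ then the edges avoiding $u$ form an intersecting $7$-partite sub-hypergraph that must be covered by the remaining four vertices, while if $u\notin C$ then each edge through $u$ must be hit elsewhere. Branching on whether the few highest-degree vertices lie in $C$, and pruning each branch with the pairwise-intersection count (an intersecting sub-hypergraph on $k$ edges forces $\binom{k}{2}$ intersections, each realised inside one of the seven partitions, so a partition of maximum degree $\delta$ supplies at most $\binom{\delta}{2}$ of them), reduces the search to a manageable list of residual configurations, each of which can be shown to need at least five further vertices.

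I expect the lower bound to be the only real difficulty. A purely fractional or LP certificate is unlikely to work: because the hypergraph is intersecting and projective-plane-like, its fractional cover number $\tau^{*}$ is presumably well below $5$ (of order $\sqrt{22}$), so there is an integrality gap and no fractional matching of weight exceeding $5$ exists to witness $\tau\ge 6$. The argument must therefore be integral, and the main obstacle is controlling the combinatorial explosion of the case analysis, which is precisely why total enumeration by computer is the cleanest justification here.
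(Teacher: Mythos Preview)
Your proposal is correct and matches the paper's own treatment: the paper simply asserts that $\tau(\mathcal{H'})=6$ ``can be checked quickly using a computer program by total enumeration'' and gives no further argument. Your upper bound via a six-vertex partition and your lower bound via exhaustive search over $\binom{43}{5}$ five-sets are exactly this; the branching sketch you add as an alternative is not attempted in the paper for this lemma.
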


The existence of $\mathcal{H'}$ and Lemma~\ref{tau_6} allows us to prove Theorem~\ref{r7_ext}.

\section{Concluding remarks}
In this paper we focused on constructing intersecting $r$-partite hypergraphs with $\tau(\mathcal{H})=r-1$. At the moment, for large $r$, the only constructions of such hypergraphs for large $r$ come from removing a vertex from a projective plane. Since projective planes only exist for prime powers, there are some values of $r$ for which we do not know if an extremal hypergraph for Ryser's Conjecture exists.

It would be of great interest to construct new examples of hypergraphs with $\tau(\mathcal{H})=r-1$, particularly for large $r$. To this end it would be interesting to even find hypergraphs for which $\tau(\mathcal{H})$ is ``close'' to $r-1$. Notice that from the projective plane construction, for \emph{every} $r$ it is possible to construct an $r$-partite intersecting hypergraphs with $\tau(\mathcal{H})=r-o(r)$, where $o(r)/r\to 0$ as $r \to \infty$. Indeed if for some $r$ there exists an $r$-partite intersecting hypergraph $\mathcal H$ with cover number $\tau$, then there are also $s$-partite intersecting hypergraphs with cover number $\tau$ for ever $s\geq r$ (these are constructed from $\mathcal H$ simply by adding $s-r$ new vertices to each edge). Therefore to construct hypergraphs with $\tau(\mathcal{H})=r-o(r)$ it is sufficient to know that for every $\epsilon>0$, there is an $N$ such that for all $n>N$ there is a prime power between $n$ and $(1+\epsilon)n$. In fact, there is always a prime in this interval for sufficiently large $n$. This can be shown using the Prime Number Theorem as an easy exercise. 

Any family of graphs satisfying $\tau(\mathcal{H})=r-o(r)$ which is different from the projective plane construction would already be interesting. We set the following problem to motivate further research.\\

\begin{problem}
For some fixed constant $c$ and every $r$ construct an $r$-uniform $r$-partite intersecting hypergraph with $\tau(\mathcal{H})=r-c$.
\end{problem}

In this paper we were interested in constructing extremal hypergraph for Ryser's Conjecture which had \emph{as few edges as possible}. Mansour, Song and Yuster conjectured that such hypergraphs have linearly many edges.\\

\begin{conjecture}[Mansour, Song, and Yuster, \cite{mans}]
Let $f(r)$ be the smallest integer for which there exists an $r$-uniform $r$-partite intersecting hypergraph  with $f(r)$ edges and $\tau(\mathcal{H})=r-1$.
Then $f(r)=\Theta(r)$.
\end{conjecture}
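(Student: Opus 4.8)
The plan is to split the conjecture into its two halves — the lower bound $f(r)=\Omega(r)$ and the upper bound $f(r)=O(r)$ — and treat them separately, since they are of entirely different difficulty. The lower bound is the routine direction and follows at once from the greedy cover bound recalled earlier in the paper. If $\mathcal{H}$ is an $r$-partite intersecting hypergraph with $\tau(\mathcal{H})=r-1$, then pairing up the edges and selecting, from each pair, a vertex in their (nonempty) intersection yields a cover of size at most $\lceil |E(\mathcal{H})|/2\rceil$ (one leftover edge, if $|E(\mathcal{H})|$ is odd, is covered by any of its vertices). Hence $r-1=\tau(\mathcal{H})\le \lceil |E(\mathcal{H})|/2\rceil$, which rearranges to $|E(\mathcal{H})|\ge 2r-3$. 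As this holds for every admissible $\mathcal{H}$, we obtain $f(r)\ge 2r-3=\Omega(r)$, consistent with the exact values $f(3)=3$, $f(4)=6$, $f(5)=9$, $f(6)=13$ established above. So the entire content of the conjecture lies in the upper bound.

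For the upper bound I would aim to construct, for every $r$, an $r$-partite intersecting hypergraph with $\tau=r-1$ using only $O(r)$ edges. The obstruction is that the only known general construction — truncating a projective plane — uses $(r-1)^2$ edges, which is quadratic, and moreover exists only when $r-1$ is a prime power. The small exact values found here and in \cite{Wanless} (whose successive differences are $3,3,4,4,\dots$) strongly suggest linear growth, so the natural route is a recursive or \emph{product-like} construction that glues smaller extremal hypergraphs together while controlling both the number of parts and the edge count. Concretely, I would search for an operation taking an extremal $r_1$-partite hypergraph with $m_1$ edges and an extremal $r_2$-partite hypergraph with $m_2$ edges to an extremal $(r_1+r_2-c)$-partite hypergraph whose edge count is $O(m_1+m_2)$ rather than $m_1m_2$; iterating such an operation from a fixed small base case would then deliver $f(r)=O(r)$.

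The main obstacle is precisely that the naive combining operations fail to boost the cover number additively. For example, forming edges $e_1\cup e_2$ with $e_1\in\mathcal{H}_1$ and $e_2\in\mathcal{H}_2$ does produce an intersecting hypergraph, but any cover of $\mathcal{H}_1$ already covers every such edge, so the cover number is at most $\min(\tau(\mathcal{H}_1),\tau(\mathcal{H}_2))$ rather than $\tau(\mathcal{H}_1)+\tau(\mathcal{H}_2)$; and the product has $m_1 m_2$ edges, far too many. Designing a gluing that simultaneously keeps the hypergraph intersecting, raises $\tau$ by roughly the sum of the two cover numbers, and keeps the edge count linear is the crux, and is exactly why the upper bound remains open.

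A secondary difficulty is that any purely algebraic construction in the spirit of projective planes is tied to prime powers, so a solution valid for \emph{all} $r$ likely requires a genuinely combinatorial, plane-free idea. Indeed, a construction realising the upper bound for every $r$ would in particular produce, for every $r$, an intersecting hypergraph with $\tau(\mathcal{H})=r-1$, thereby resolving the Problem stated above with $c=1$ — a question that is itself open even without any control on the number of edges. Accordingly, I would regard the lower bound as settled and direct the whole effort toward constructing a sparse, recursively-built family that realises the upper bound, viewing the additive-$\tau$ gluing as the decisive step.
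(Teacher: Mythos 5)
You have not proved the statement, and indeed you could not have been expected to: it is stated in the paper as an open \emph{conjecture} (due to Mansour, Song, and Yuster), and the paper offers no proof of it — only the remark that the best known lower bound is $f(r) > 3.052r + O(1)$ from \cite{Wanless}. Your lower-bound half is correct as far as it goes: the greedy-cover bound $\tau(\mathcal{H})\le\lceil|E(\mathcal{H})|/2\rceil$ does give $f(r)\ge 2r-3=\Omega(r)$, and this is essentially the trivial bound the paper recalls. But note it is weaker than what was already known (\cite{mans} gives a non-trivial bound, and \cite{Wanless} gives $3.052r$), so this half contributes nothing new, and in any case the $\Omega(r)$ direction was never in doubt.

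The genuine gap is the entire upper bound $f(r)=O(r)$, for which you supply a research plan rather than an argument. The ``additive-$\tau$ gluing'' you posit — an operation combining extremal $r_1$- and $r_2$-partite hypergraphs into an extremal $(r_1+r_2-c)$-partite one with $O(m_1+m_2)$ edges — is not constructed, and your own discussion shows why the obvious candidates fail: the product $\{e_1\cup e_2\}$ satisfies $\tau\le\min(\tau_1,\tau_2)$, not $\tau_1+\tau_2$. No such operation is currently known, and its existence would be a major result: as you yourself observe, it would settle the paper's Problem with $c=1$, i.e.\ it would produce extremal hypergraphs for \emph{every} $r$, whereas at present these are known only when $r-1$ is a prime power (via truncated projective planes) plus isolated small cases such as $r=7$, found here by computer search. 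A subtler point you gloss over: since existence of extremal hypergraphs is open for general $r$, the quantity $f(r)$ is not even known to be well-defined (finite) for all $r$, so the conjecture implicitly presupposes a construction that nobody has. In short: lower bound fine but trivial and already superseded; upper bound entirely missing, and correctly identified by you as open — which means this is a proposal for future work, not a proof of the statement.
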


The first non-trivial lowerbound on $f(r)$ was proved in \cite{mans}, while the current best lowerbound is $f(r) > 3.052r + O(1)$ proved in \cite{Wanless}.

\section*{Acknowledgment}
The authors wish to thank Penny Haxell for giving an interesting talk on Ryser's Conjecture at the LSE which motivated them to research the conjecture. The first author also wishes to thank his supervisors Bernhard von Stengel and Jan van den Heuvel for helpful advice and discussions.

\bibliographystyle{plain}
\bibliography{ryser_bib}

\begin{thebibliography}{10}

\bibitem{ahro}
R.~Aharoni.
\newblock Ryser's conjecture for tripartite 3-graphs.
\newblock {\em Combinatorica}, 21(1):1--4, 2001.

\bibitem{ahha}
R.~Aharoni and P.~Haxell.
\newblock Hall's theorem for hypergraphs.
\newblock {\em J. Graph Theory}, 35(2):83--88, 2000.

\bibitem{Wanless}
Ron Aharoni, Janos Bar\'at, and Ian Wanless.
\newblock Multipartite hypergraphs achieving equality in ryser's conjecture.
\newblock {\em preprint}, 2014.

\bibitem{bruck}
R.H. Bruck and H.J. Ryser.
\newblock The nonexistence of certain finite projective planes.
\newblock {\em Canadian J. Math}, 1:88 -- 93, 1949.

\bibitem{duch}
P.~Duchet.
\newblock {\em Repr\'esentation, noyaux en th\'eorie des graphes et
  hypergraphes}.
\newblock PhD thesis, Paris, 1979.

\bibitem{gyra}
A.~Gy\'arf\'as.
\newblock Partition coverings and blocking sets in hypergraphs (in
  {H}ungarian).
\newblock {\em Communications of the Computer and Automation Institute of the
  Hungarian Academy of Sciences}, 71, 1977.

\bibitem{hall}
P.~Hall.
\newblock On representatives of subsets.
\newblock {\em J. London Math Soc.}, 10:26--30, 1935.

\bibitem{HNS}
Penny Haxell, Lothar Narins, and Tibor Szab\'o.
\newblock Extremal hypergraphs for {R}yser's {C}onjecture {I}{I}: Home-base
  hypergraphs.
\newblock {\em arXiv:1401.0171}, 2013.

\bibitem{hend}
J.~R. Henderson.
\newblock {\em Permutation Decomposition of (0,1)-Matrices and Decomposition
  Transversals}.
\newblock PhD thesis, Caltech, 1971.

\bibitem{koi}
D.~K\"{o}nig.
\newblock {\em Theorie der endlichen und unendlichen Graphen}.
\newblock Akademie Verlag M.B.H., Leipzig, 1936.

\bibitem{mans}
T.~Mansour, C.~Song, and R.~Yuster.
\newblock A comment on ryser's conjecture for intersecting hypergraphs.
\newblock {\em Graphs and Combinatorics}, 25:101 -- 109, May 2009.

\bibitem{tarry}
G.~Tarry.
\newblock Le probl\`eme de 36 officiers.
\newblock {\em Compte Rendu de l'Assoc. Fran\c{c}ais Avanc. Sci. Naturel},
  2:170 -- 203, 1901.

\bibitem{tuz_un}
Zs. Tuza.
\newblock Some special cases of {R}yser's conjecture.
\newblock {\em unpublished manuscripts}, 1979.

\bibitem{van_lint}
J.~H. van Lint and R.~M. Wilson.
\newblock {\em A Course in Combinatorics}.
\newblock Cambridge University Press, 2nd edition, 2001.

\end{thebibliography}

\newpage
\appendix
\section{Appendix: Proof of Lemma~\ref{6_is_extremal}}

To make it easier to verify the claims in the following proof, Table~\ref{DTable} provides another representation of the hypergraph $\mathcal{H}$ referred to in Lemma~\ref{6_is_extremal}. Table~\ref{DTable} presents $\mathcal{H}$ in terms of its degree structure, where we use the notation $E(v)$ to denote the set of hyperedges in $\mathcal{H}$ that contain the vertex $v$. Each row in Table \ref{DTable} corresponds to a partition of $\mathcal{H}$, and the columns break downs the vertices in a given partition according to their degrees.

  \begin{table}[!htp]

    \footnotesize
  \begin{tabular}{c|c|c|c|c|}

    \cline{2-5}
    & \multicolumn{4}{c|}{Degrees} \\

    \cline{2-5}
    & $deg\ 1$ & $deg\ 2$ & $deg\ 3$ & $deg\ 4$ \\
    \cline{1-5}

    %Partition 1
    \multicolumn{1}{|c|}{\multirow{3}{*}{\scriptsize{Partition $1$}}} & \multirow{3}{*}{\shortstack{\scriptsize{$E((1,1)) = \{E_1\}$} \\ \scriptsize{$E((1,2)) = \{E_2\}$} \\ \scriptsize{$E((1,3)) = \{E_{3}\}$}}} & \multirow{3}{*}{\scriptsize{$E((1,4)) = \{E_4, E_5\}$}} &  & \multirow{3}{*}{\shortstack{\scriptsize{$E((1,5)) = \{E_6, E_7, E_8, E_9\}$} \\ \scriptsize{$E((1,6)) = \{E_{10}, E_{11}, E_{12}, E_{13}\}$}}} \\
    \multicolumn{1}{|c|}{} & & & &  \\
    \multicolumn{1}{|c|}{} & &  &  &  \\
    \cline{1-5}
    
    % Partition 2
    \multicolumn{1}{|c|}{\multirow{3}{*}{\scriptsize{Partition $2$}}} & \multirow{3}{*}{\scriptsize{$E((2,1)) = \{E_4\}$}} &  \multirow{3}{*}{\shortstack{\scriptsize{$E((2,2)) = \{E_6, E_{10}\}$} \\ \scriptsize{$E((2,3)) = \{E_9, E_{13} \}$}}} &  & \multirow{3}{*}{\shortstack{\scriptsize{$E((2,4)) = \{E_1, E_3, E_8, E_{11}\}$} \\ \scriptsize{$E((2,5)) = \{E_2, E_5, E_7, E_{12}\}$}}} \\
    \multicolumn{1}{|c|}{} & & & & \\
    \multicolumn{1}{|c|}{} & & & & \\
    \cline{1-5}
    
    % Partition 3
    \multicolumn{1}{|c|}{\multirow{3}{*}{\scriptsize{Partition $3$}}} & \multirow{3}{*}{\scriptsize{$E((3,1)) = \{E_7\}$}} &  \multirow{3}{*}{\shortstack{\scriptsize{$E((3,2)) = \{E_2, E_{11}\}$} \\ \scriptsize{$E((3,3)) = \{E_8, E_{13} \}$}}} &  & \multirow{3}{*}{\shortstack{\scriptsize{$E((3,4)) = \{E_1, E_5, E_9, E_{10}\}$} \\ \scriptsize{$E((3,5)) = \{E_3, E_4, E_6, E_{12}\}$}}} \\
    \multicolumn{1}{|c|}{} & & & & \\
    \multicolumn{1}{|c|}{} & & & & \\
    \cline{1-5}

    % Partition 4
    \multicolumn{1}{|c|}{\multirow{3}{*}{\scriptsize{Partition $4$}}} & \multirow{3}{*}{\scriptsize{$E((4,1)) = \{E_{12}\}$}} & \multirow{3}{*}{\scriptsize{$E((4,2)) = \{E_5, E_8\}$}} & \multirow{3}{*}{\shortstack{\scriptsize{$E((4,3)) = \{E_3, E_7, E_{10}\}$} \\ \scriptsize{$E((4,4)) = \{E_4, E_9, E_{11}\}$}}} &  \multirow{3}{*}{\scriptsize{$E((4,5)) = \{E_1, E_2, E_6, E_{13}\}$}} \\
    \multicolumn{1}{|c|}{} & \multicolumn{1}{|c|}{} &  &  &  \\
    \multicolumn{1}{|c|}{} & \multicolumn{1}{|c|}{} &  &  &  \\
    
    \cline{1-5}

    % Partition 5
    \multicolumn{1}{|c|}{\multirow{3}{*}{\scriptsize{Partition $5$}}} & \multirow{3}{*}{\scriptsize{$E((5,1)) = \{E_6\}$}} & \multirow{3}{*}{\scriptsize{$E((5,2)) = \{E_7, E_{11}\}$}} & \multirow{3}{*}{\shortstack{\scriptsize{$E((5,3)) = \{E_1, E_9, E_{12}\}$} \\ \scriptsize{$E((5,4)) = \{E_3, E_5, E_{13}\}$}}} &  \multirow{3}{*}{\scriptsize{$E((5,5)) = \{E_2, E_4, E_8, E_{10}\}$}} \\
    \multicolumn{1}{|c|}{} & \multicolumn{1}{|c|}{} &  &  &  \\
    \multicolumn{1}{|c|}{} & \multicolumn{1}{|c|}{} &  &  &  \\
    \cline{1-5}

    % Partition 6
    \multicolumn{1}{|c|}{\multirow{3}{*}{\scriptsize{Partition $6$}}} & \multirow{3}{*}{\scriptsize{$E((6,1)) = \{E_{10}\}$}} & \multirow{3}{*}{\scriptsize{$E((6,2)) = \{E_8, E_{12}\}$}} & \multirow{3}{*}{\shortstack{\scriptsize{$E((6,3)) = \{E_2, E_3, E_9\}$} \\ \scriptsize{$E((6,4)) = \{E_5, E_6, E_{11}\}$}}} &  \multirow{3}{*}{\scriptsize{$E((6,5)) = \{E_1, E_4, E_7, E_{13}\}$}} \\
    \multicolumn{1}{|c|}{} & \multicolumn{1}{|c|}{} &  &  &  \\
    \multicolumn{1}{|c|}{} & \multicolumn{1}{|c|}{} &  &  &  \\

    \cline{1-5}
    
  \end{tabular}
  \caption{Degree structure of $\mathcal{H}$}
  \label{DTable}
  
\end{table}

\begin{proof}[Proof of Lemma~\ref{6_is_extremal}]
      We first observe that if we exclude the hyperedge $E_1$ from $\mathcal{H}$ then the remaining hyperedges $\mathcal{H}$ form a \emph{linear} hypergraph. A hypergraph $\mathcal{G}$ is linear if the pairwise intersection of any two hyperedges in $\mathcal{G}$ is a singleton set.\\
    
    \begin{claim}
      \label{linear}
      For all $E_i, E_j \in E(\mathcal{H})$ such that $i, j \in \{2, \ldots, 13\}$ and i $\neq j$ we have that $|E_i \cap E_j| = 1$\\
    \end{claim}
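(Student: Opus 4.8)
The plan is to prove the claim by a double-counting argument applied to the sub-hypergraph $\mathcal{G}$ obtained from $\mathcal{H}$ by deleting the single edge $E_1$, so that $E(\mathcal{G})=\{E_2,\ldots,E_{13}\}$. Since $\mathcal{H}$ is intersecting, so is $\mathcal{G}$, and hence every pair $E_i,E_j$ with $i,j\in\{2,\ldots,13\}$ already satisfies $|E_i\cap E_j|\geq 1$. Thus the real content of the claim is the reverse inequality: that no two of these twelve edges meet in two or more vertices.

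First I would record the degree of each vertex in $\mathcal{G}$. These can be read off directly from Table~\ref{DTable}, after decreasing by one the degree of each of the six vertices $(1,1),(2,4),(3,4),(4,5),(5,3),(6,5)$ that lie on the deleted edge $E_1$. The key identity is then the standard double count
\begin{equation}
  \sum_{v\in V(\mathcal{G})}\binom{d_{\mathcal{G}}(v)}{2}
  \;=\;\sum_{2\le i<j\le 13}\bigl|E_i\cap E_j\bigr|,
\end{equation}
which holds because each unordered pair $\{E_i,E_j\}$ is counted once on the left for every vertex the two edges share. I would evaluate the left-hand side partition by partition using the adjusted degrees and verify that the total equals $\binom{12}{2}=66$. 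Since the right-hand side is a sum of $\binom{12}{2}=66$ terms, each of which is at least $1$ by the intersecting property, the equality forces every term to equal exactly $1$. This is precisely the assertion that $|E_i\cap E_j|=1$ for all $i,j\in\{2,\ldots,13\}$ with $i\neq j$.

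The argument is conceptually short, so the only genuine work lies in the bookkeeping of step one and the arithmetic of the final total. I expect the main (minor) obstacle to be verifying the degree sequence of $\mathcal{G}$ with care: because the equality $\sum_v\binom{d_{\mathcal{G}}(v)}{2}=66$ is exactly tight, it leaves no slack whatsoever to absorb an error, so a single miscounted degree would break the conclusion rather than merely weaken it. Once the degrees are confirmed against the edge list, the inequality-to-equality step is immediate and the claim follows.
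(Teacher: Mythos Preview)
Your proposal is correct. The double-counting identity
\[
\sum_{v\in V(\mathcal{G})}\binom{d_{\mathcal{G}}(v)}{2}=\sum_{2\le i<j\le 13}|E_i\cap E_j|
\]
together with the lower bound $|E_i\cap E_j|\ge 1$ from the intersecting property is exactly the right tool, and the arithmetic does indeed give $66$ on both sides (I checked: the partition-by-partition contributions after deleting $E_1$ are $13,11,11,10,11,10$).

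This is genuinely different from what the paper does. The paper simply asserts Claim~\ref{linear} (and the companion Claim~\ref{regular}) as facts to be read off from the explicit edge list and Table~\ref{DTable}; no argument is given beyond implicit brute-force inspection of all $\binom{12}{2}=66$ pairs. Your approach trades that pairwise check for a single degree-sequence computation, which is both shorter to carry out and conceptually cleaner: it explains \emph{why} the linearity of $\mathcal{H}\setminus\{E_1\}$ is forced, namely because the degree profile is exactly tight against the intersecting lower bound. The paper's approach has the minor advantage that it requires no thought and simultaneously verifies the intersecting property, whereas your argument must take intersecting as an input hypothesis; but as a proof of the claim as stated, yours is the more informative one.
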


    On the other hand the hyperedge $E_1$ intersect only two hyperedges of $\mathcal{H}$ more than once.\\

    \begin{claim}
      \label{regular}
      $|E_1 \cap E_9| = |E_1 \cap E_{13}| = 2$ and $|E_1 \cap E_i| = 1$ for all $E_i \in E(\mathcal{H}), i \not\in \{9, 13\}$\\
    \end{claim}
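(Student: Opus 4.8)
The plan is to verify Claim~\ref{regular} by a direct finite computation, which is most cleanly organised using the degree representation in Table~\ref{DTable} rather than the raw edge list. The key observation is that for any edge $E_i$ with $i \neq 1$, the intersection size $|E_1 \cap E_i|$ equals the number of vertices $v \in E_1$ for which $E_i \in E(v)$. So the entire claim reduces to a tally: I would go through the six vertices of $E_1$, namely $(1,1),(2,4),(3,4),(4,5),(5,3),(6,5)$, read off the list $E(v) \setminus \{E_1\}$ for each from Table~\ref{DTable}, and count how many times each $E_i$ appears across all six lists.

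Before carrying out the tally I would record a consistency check that makes the outcome transparent. The vertex $(1,1)$ has degree $1$, so it lies only in $E_1$ and contributes nothing; the other five vertices of $E_1$ have degrees $4,4,4,3,4$. Hence the number of incidences between a vertex of $E_1$ and an edge other than $E_1$ is $(4-1)+(4-1)+(4-1)+(3-1)+(4-1)=14$. Since $\mathcal{H}$ is intersecting, $E_1$ meets each of the twelve edges $E_2,\dots,E_{13}$ in at least one vertex, accounting for at least $12$ of these incidences. This leaves exactly two ``extra'' incidences, so the multiset $\{|E_1\cap E_i| : 2 \le i \le 13\}$ consists either of two edges met twice and ten met once, or of one edge met three times and eleven met once.

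The tally then pins down which case occurs and which edges are involved. Reading Table~\ref{DTable}, the edge $E_9$ appears in both $E((3,4))$ and $E((5,3))$, and $E_{13}$ appears in both $E((4,5))$ and $E((6,5))$, while every other edge $E_i$ with $i \in \{2,\dots,13\}\setminus\{9,13\}$ appears in exactly one of the six lists. This realises the first case of the consistency check, giving $|E_1\cap E_9| = |E_1\cap E_{13}| = 2$ and $|E_1\cap E_i| = 1$ for all remaining $i$, as claimed.

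I do not expect any genuine obstacle here: the statement is a verification about a single explicit hypergraph, and the only thing to be careful about is the bookkeeping. The consistency count reduces the risk of arithmetic slips, since it tells us in advance that exactly two extra incidences must appear, and the tally merely has to locate them on $E_9$ and $E_{13}$ and rule out the alternative of a single triple intersection.
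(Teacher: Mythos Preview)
Your proposal is correct. The paper does not give a separate proof of this claim---it is stated as a direct observation about the explicit hypergraph---so your verification via the degree table is exactly what is implicitly expected, and your incidence-count consistency check is a nice addition that makes the bookkeeping robust.
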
 

    Furthermore, we observe that some of the hyperedges in $\mathcal{H}$ form a $2$-regular sub-hypergraph of $\mathcal{H}$.\\

    \begin{claim}\label{S1234}
      Let $S_1 = \{E_1, E_2, E_3, E_4, E_5\}$, $S_2 = \{E_{4}, E_{6}, E_{9}, E_{10}, E_{13}\}$ and $S_3 = \{E_2, E_7, E_8, E_{11}, E_{13}\}$, then $S_1$, $S_2$ and $S_3$ are all 2-regular linear sub-hypergraphs of $\mathcal{H}$, and thus we have $\tau(S_1) = \tau(S_2) = \tau(S_3) = 3$.\\
    \end{claim}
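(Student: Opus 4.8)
The plan is to treat the three quintuples uniformly: for each $S_i$ I will first confirm that it is linear, then that its maximum degree is $2$, and finally deduce $\tau(S_i)=3$ from these two structural facts. Linearity comes almost for free from the earlier claims. Since $S_2=\{E_4,E_6,E_9,E_{10},E_{13}\}$ and $S_3=\{E_2,E_7,E_8,E_{11},E_{13}\}$ consist entirely of edges with indices in $\{2,\dots,13\}$, Claim~\ref{linear} immediately gives that every pair of their edges meets in a single vertex. The set $S_1=\{E_1,E_2,E_3,E_4,E_5\}$ does contain $E_1$, but it avoids both $E_9$ and $E_{13}$, so Claim~\ref{regular} guarantees that the pairs involving $E_1$ are also singletons and Claim~\ref{linear} handles the remaining pairs. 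Hence all three $S_i$ are linear.

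Next I would verify the $2$-regularity. For a linear hypergraph on $5$ edges there are exactly $\binom{5}{2}=10$ pairs of edges, each contributing one intersection vertex. I would list these ten intersection vertices explicitly for each $S_i$ and check that they are pairwise distinct. Distinctness is the crucial point: a vertex lying in $d\ge 2$ edges is the common point of $\binom{d}{2}$ pairs, so the identity $\sum_{v}\binom{d(v)}{2}=10$ forces every intersection vertex to have degree exactly $2$ precisely when all ten are distinct. Thus the ten degree-$2$ vertices form a $2$-regular $4$-uniform core while the remaining vertices of each edge are private (degree $1$), and in particular $\Delta(S_i)=2$.

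Finally I would read off the covering number. For the lower bound, if two vertices $u,v$ covered $S_i$ then $|E(u)\cup E(v)|=5$ would force $d(u)+d(v)\ge 5$, which is impossible once $\Delta(S_i)=2$; hence $\tau(S_i)\ge 3$. For the upper bound, each $S_i$ is an intersecting hypergraph with five edges, so the greedy cover described in the preliminaries produces a cover of size $\lceil 5/2\rceil=3$, giving $\tau(S_i)\le 3$. Combining the two bounds yields $\tau(S_1)=\tau(S_2)=\tau(S_3)=3$.

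There is no genuine analytic difficulty here; the statement is a finite check. The one step that actually carries the conclusion is the verification that the ten pairwise intersections of each $S_i$ are distinct, i.e.\ that $\Delta(S_i)=2$: it is exactly the absence of a vertex meeting three edges that prevents two vertices from covering all five edges and so forces $\tau(S_i)=3$ rather than $2$. Accordingly, the bulk of the care goes into tabulating the intersection vertices correctly for the three quintuples of edges.
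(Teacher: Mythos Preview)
Your proposal is correct and complete. The paper itself does not supply a proof of this claim---it is stated as a routine verification and used without further argument---so your write-up is exactly the kind of check the reader is expected to carry out: linearity via Claims~\ref{linear} and~\ref{regular}, $\Delta(S_i)=2$ by confirming the ten pairwise intersection vertices are distinct, and then $\tau(S_i)=3$ from the pigeonhole lower bound and the greedy upper bound.
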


    We next show that if $\mathcal{H}$ has a cover $C$ that contains a vertex of degree $4$ then $|C| > 4$. From Table \ref{DTable} we can see that the partitions of $\mathcal{H}$ can be categorized into two types, those that contain two vertices of degree $4$ and those that only contain one vertex of degree $4$.

    In Claim~\ref{deg_4A} we will show that if $C$ contains a vertex of degree $4$ that is also from a partition with two vertices of degree $4$ then $|C|  > 4$. While in Claim~\ref{deg_4B} we will show that $|C| > 4$ if $C$ contains a vertex of degree $4$ that is from a partition that contains only one vertex of degree $4$.   \\

    \begin{claim}
      \label{deg_4A}
      If $C$ is a cover of $\mathcal{H}$ that contains a vertex $v$ of degree $4$, and $v$ is from a partition that contains two vertices of degree $4$, then $|C| > 4$.
    \end{claim}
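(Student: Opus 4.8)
The plan is to exploit the three $2$-regular linear sub-hypergraphs $S_1, S_2, S_3$ from Claim~\ref{S1234}, each of which has covering number $3$. The six degree-$4$ vertices lying in a partition that contains two degree-$4$ vertices are exactly those of partitions $1,2,3$ of Table~\ref{DTable}: the pairs $\{(1,5),(1,6)\}$, $\{(2,4),(2,5)\}$, $\{(3,4),(3,5)\}$. The first observation I would record is that in each of these partitions the $13$ edges split into three disjoint blocks: the four edges through $v$, the four edges through the \emph{other} degree-$4$ vertex $w$ of the same partition, and a remaining block of five edges which is precisely $S_1$ (for partition $1$), $S_2$ (for partition $2$), or $S_3$ (for partition $3$); this is read off directly from Table~\ref{DTable}. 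Now assume for contradiction that $C$ is a cover with $v \in C$ and $|C| = 4$, and set $C' = C \setminus \{v\}$, so $|C'| = 3$. Since $v$ covers only its own four edges, $C'$ must cover the remaining nine edges, namely the four edges $T$ through $w$ together with the five edges of the block $S \in \{S_1,S_2,S_3\}$.

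I would then split according to whether $w \in C'$. If $w \in C'$, then $w$ covers all of $T$ but, as $S$ is disjoint from the edges through $w$, it covers no edge of $S$; this leaves only two vertices of $C'$ to cover $S$, contradicting $\tau(S) = 3$ from Claim~\ref{S1234}. If $w \notin C'$, the key subclaim I would establish is that every vertex other than $w$ lies in at most one edge of $T$, up to a single controlled exception. Indeed, if some $u \neq w$ lay in two edges $E_a, E_b \in T$, then, since both also contain $w$, we would have $|E_a \cap E_b| \geq 2$; by Claims~\ref{linear} and~\ref{regular} the only pairs of edges with this property are $\{E_1, E_9\}$ and $\{E_1, E_{13}\}$, and a glance at Table~\ref{DTable} shows that the only such pair contained in a common $T$ is $\{E_1, E_9\}$, which occurs solely for $w = (3,4)$ (the case $v = (3,5)$), where the offending vertex is $(5,3)$.

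With the subclaim in hand the counting concludes the proof. In every case except $v = (3,5)$, each of the three vertices of $C'$ lies in at most one edge of $T$, so together they meet at most three of the four edges of $T$, leaving one edge of $T$ uncovered. In the exceptional case the only vertex that can meet two edges of $T$ is $(5,3)$; since $E((5,3)) = \{E_1, E_9, E_{12}\}$ is disjoint from $S_3$, using $(5,3)$ in $C'$ leaves only two vertices to cover the five edges of $S_3$, again contradicting $\tau(S_3) = 3$, while not using it returns us to the previous counting. Either way $C'$ cannot cover the nine edges, so $|C| > 4$. The step I expect to require the most care is the subclaim: one must be certain that $E_1 \cap E_9$ and $E_1 \cap E_{13}$ are the \emph{only} double intersections in $\mathcal{H}$ (this is exactly the content of Claims~\ref{linear} and~\ref{regular}) and that the single genuinely exceptional vertex $(5,3)$ meets no edge of the relevant block $S_3$, so that the clean counting argument is never actually defeated.
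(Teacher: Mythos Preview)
Your proof is correct and follows essentially the same approach as the paper: split off the four edges through $v$, argue that the other degree-$4$ vertex $w$ of the same partition must lie in $C$ (otherwise linearity forces four further vertices just to cover $E(w)$, with the lone exception at $(5,3)$ handled exactly as you do), and then invoke $\tau(S_i)=3$ on the residual five-edge block. Your write-up is in fact a bit more explicit than the paper's, which treats the cases $(1,5),(1,6),(2,4),(2,5),(3,4)$ by the same argument and then singles out $(3,5)$ for the $(5,3)$ exception, but the underlying idea is identical.
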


    \begin{proof}
     Assume that $C$ is as in the claim, and that $C$ contains the vertex $(1,6)$. If $C$ is a cover of $\mathcal{H}$ that contains $(1,6)$ then if $C$ doesn't contain $(1,5)$ (the other vertex of degree $4$ in partition 1) then by Claim~\ref{linear} it must contain at least four more vertices to cover $E_6, E_7, E_8$ and $E_9$.

      Hence assume $C$ contains both $(1,6)$ and $(1,5)$. By Claim~\ref{S1234}, $C$ needs to contain three more vertices to cover the hyperedges in $S_1 = \{E_1, E_2, E_3, E_4, E_5\}$. Therefore if $C$ contains $(1,6)$ it will contain at least five vertices.
      
      The cases when $v$ is one of the vertices $(1,5), (2,4), (2,5)$ and $(3,4)$ are proved identically, replacing $S_1$ with $S_2$ or $S_3$ where necessary. This leaves the case $(3,5)$ where the above reasoning doesn't apply since in this case it is possible to cover the hyperedges not in $E((3,5))$ by three vertices (since $|E_1 \cap E_9| = |\{ (3,4), (5,3)\}| = 2$). However, if $C$ doesn't contain $(5,3)$ we can still apply the above reasoning to get $|C| > 4$. Therefore assume $C$ contains $(3,5)$ and $(5,3)$. In this situation $C$ must still contain three more vertices to cover the edges in $\mathcal{S}_3$ which concludes the proof. \\

    \end{proof}

    We now consider the covers of $\mathcal{H}$ that contain a vertex of degree $4$ and are in a partition that only contains one vertex of degree $4$. These vertices are $(4,5)$, $(5,5)$ and $(6,5)$.\\

    \begin{claim}
      \label{deg_4_int}
      Let $v$ and $u$ be two distinct vertices of $\mathcal{H}$ such that $v, u \in \{ (4,5), (5,5), (6,5)\}$ then $|E(v) \cap E(u)| \leq 7$.\\
    \end{claim}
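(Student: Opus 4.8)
The plan is to prove the stated upper bound on the co-degree directly from the degree data recorded in Table~\ref{DTable}, with no covering argument required. First I would read off the ``$\deg 4$'' entries of the three relevant rows of Table~\ref{DTable}, namely
\[
E((4,5)) = \{E_1, E_2, E_6, E_{13}\},\quad E((5,5)) = \{E_2, E_4, E_8, E_{10}\},\quad E((6,5)) = \{E_1, E_4, E_7, E_{13}\}.
\]
The immediate point is that each of these vertices has degree exactly $4$, so that $|E(v)| = |E(u)| = 4$ whenever $v$ and $u$ are two distinct vertices from $\{(4,5),(5,5),(6,5)\}$.

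The central step is then the observation that, by the definition of $E(\cdot)$, the set $E(v) \cap E(u)$ consists precisely of the hyperedges containing \emph{both} $v$ and $u$, and this is a subset of $E(v)$. Hence $|E(v) \cap E(u)| \leq |E(v)| = 4 \leq 7$, which is exactly the claimed bound. Thus the inequality holds for every admissible pair with room to spare: the degree ceiling of $4$ on these three vertices (guaranteed by $\Delta(\mathcal{H}) = 4$ and confirmed in Table~\ref{DTable}) already forces the co-degree strictly below $7$.

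To make the bound concrete, and to record the exact co-degrees that the remainder of the proof of Lemma~\ref{6_is_extremal} will invoke, I would finally compute the three pairwise intersections explicitly from the edge-sets above:
\[
E((4,5)) \cap E((5,5)) = \{E_2\},\quad E((4,5)) \cap E((6,5)) = \{E_1, E_{13}\},\quad E((5,5)) \cap E((6,5)) = \{E_4\},
\]
so that $|E(v) \cap E(u)| \in \{1,2\}$ in every case. There is essentially no obstacle here: the only point demanding care is reading the four edges through each vertex off the correct cell of Table~\ref{DTable} (or, equivalently, cross-checking against the explicit edge list defining $\mathcal{H}$), after which the bound $|E(v) \cap E(u)| \leq 7$ is immediate from $|E(v)| = 4$. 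I note in passing that the bound is far from tight; the nontrivial structural content that the subsequent cover analysis actually needs is the \emph{reverse} direction, namely that each of these co-degrees is at least $1$, but that is a separate statement and is not what is asserted here.
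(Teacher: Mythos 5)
Your computations are all correct, and your method---reading $E((4,5))=\{E_1,E_2,E_6,E_{13}\}$, $E((5,5))=\{E_2,E_4,E_8,E_{10}\}$, $E((6,5))=\{E_1,E_4,E_7,E_{13}\}$ off Table~\ref{DTable} and checking pairs directly---is exactly the verification the paper itself relies on, since Claim~\ref{deg_4_int} is stated there without a written proof. However, there is a genuine issue in your framing. You are right that the statement as printed is vacuous: all three vertices have degree $4$, so $|E(v)\cap E(u)|\leq 4\leq 7$ trivially. But the $\cap$ is a typo for $\cup$. This is visible from how the claim is deployed in the proof of Claim~\ref{deg_4B}: from ``$|E(u)\cap E(v)|\leq 7$'' the paper concludes that two further vertices are needed to cover the at least $13-7=6$ remaining hyperedges, a deduction that only parses if the bound is on $|E(u)\cup E(v)|$ (compare also the very next sentence there, which bounds the \emph{triple union} by $8$). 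Since $|E(u)|=|E(v)|=4$, inclusion--exclusion gives $|E(u)\cup E(v)|=8-|E(u)\cap E(v)|$, so the intended claim is exactly equivalent to $|E(u)\cap E(v)|\geq 1$ for each pair---the direction you explicitly set aside at the end as ``a separate statement\dots not what is asserted here.'' It is not separate: it is the claim's entire operative content.

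The saving grace is that your explicit computation of the three pairwise intersections, $\{E_2\}$, $\{E_1,E_{13}\}$, and $\{E_4\}$, establishes precisely this, giving $|E(u)\cup E(v)|=7,\,6,\,7$ respectively, all at most $7$. So your write-up, taken as a whole, does prove the claim in its intended sense; what needs to change is the emphasis. Drop the trivial bound $|E(v)\cap E(u)|\leq|E(v)|=4$ as the headline argument, and instead present the nonemptiness of each pairwise intersection as the proof, concluding via inclusion--exclusion that each pair of these degree-$4$ vertices covers at most $8-1=7$ hyperedges of $\mathcal{H}$. That is the form in which Claim~\ref{deg_4B}, and hence Lemma~\ref{6_is_extremal}, actually uses the statement.
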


    \begin{claim}
      \label{deg3_deg4}
      Let $v$ be a vertex from the set $\{(4,5),  (6,5)\}$ then the only vertices $w$ of degree $3$ such that $E(v) \cap E(w) = \emptyset$ are the vertices of degree $3$ in the same partition of $v$.

      While if $v$ is the vertex $(5,5)$ then the only vertices $w$ of degree $3$ such that $E(v) \cap E(w) = \emptyset$ are the vertices of degree $3$ in the same partition as $(5,5)$ and the vertex $(6,4)$.
      
    \end{claim}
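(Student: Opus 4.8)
The plan is to verify the statement directly from the degree structure recorded in Table~\ref{DTable}, since both the degree-$3$ and degree-$4$ vertices of $\mathcal{H}$ are completely tabulated there. First I would note that every degree-$3$ vertex of $\mathcal{H}$ lies in one of partitions $4$, $5$, or $6$, and list all six of them together with their edge sets, reading off from the table
\[
E((4,3))=\{E_3,E_7,E_{10}\},\ E((4,4))=\{E_4,E_9,E_{11}\},\ E((5,3))=\{E_1,E_9,E_{12}\},
\]
\[
E((5,4))=\{E_3,E_5,E_{13}\},\ E((6,3))=\{E_2,E_3,E_9\},\ E((6,4))=\{E_5,E_6,E_{11}\}.
\]
I would similarly record the three relevant degree-$4$ vertices, namely $E((4,5))=\{E_1,E_2,E_6,E_{13}\}$, $E((5,5))=\{E_2,E_4,E_8,E_{10}\}$, and $E((6,5))=\{E_1,E_4,E_7,E_{13}\}$.

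The core of the argument is then a routine pairwise check: for each of the three degree-$4$ vertices $v$ and each of the six degree-$3$ vertices $w$, I would compute $E(v)\cap E(w)$ and record whether it is empty. Because every set involved has at most four elements, each of the eighteen intersections is immediate. For $v=(4,5)$ the empty intersections occur exactly at $w=(4,3)$ and $w=(4,4)$; for $v=(6,5)$ they occur exactly at $w=(6,3)$ and $w=(6,4)$; and for $v=(5,5)$ they occur at $w=(5,3)$ and $w=(5,4)$, and additionally at $w=(6,4)$, since $E((5,5))\cap E((6,4))=\{E_2,E_4,E_8,E_{10}\}\cap\{E_5,E_6,E_{11}\}=\emptyset$. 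In each case the vertices obtained are precisely those named in the statement: the degree-$3$ vertices in the same partition as $v$, together with the single extra vertex $(6,4)$ in the case $v=(5,5)$.

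Since the whole claim is a finite enumeration over a fixed hypergraph, there is no genuine conceptual obstacle; the only real risk is a bookkeeping slip in one of the eighteen intersections. To guard against this I would cross-check each computed intersection against the explicit edge lists $E_1,\dots,E_{13}$ given before Lemma~\ref{6_is_extremal}, rather than relying on the table alone. The one mildly surprising case, and the one I would double-check most carefully, is the ``extra'' vanishing intersection $E((5,5))\cap E((6,4))=\emptyset$ occurring \emph{across} partitions; this cross-partition coincidence is the sole reason the conclusion for $(5,5)$ differs from those for $(4,5)$ and $(6,5)$, and it is exactly the feature that the later covering arguments will need to exploit.
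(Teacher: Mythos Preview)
Your proposal is correct and is exactly the approach the paper intends: the paper states Claim~\ref{deg3_deg4} without proof, leaving it as a direct verification against Table~\ref{DTable}, and your eighteen pairwise checks carry that out explicitly and accurately (including the one cross-partition vanishing $E((5,5))\cap E((6,4))=\emptyset$).
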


    It is easy to see that Claim \ref{deg3_deg4} implies the following claim. 
    
    \begin{claim}
      \label{degree_3}
      Let $v$ and $u$ be two distinct vertices of $\mathcal{H}$ such that $v, u \in \{(4,5), (5,5), (6,5)\}$ and let $w$ be any vertex of degree $3$ in $\mathcal{H}$ except $(6,4)$, then this implies $|\big(E(v) \cup E(u) \big) \cap E(w)| \geq 1$.\\
    \end{claim}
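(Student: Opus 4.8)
The plan is to recast the desired inequality as a statement about which degree-$3$ vertices each of the three degree-$4$ vertices fails to meet, and then to read everything off from Claim~\ref{deg3_deg4} directly. First I would observe that the inequality $|(E(v) \cup E(u)) \cap E(w)| \geq 1$ can only fail when $w$ shares no edge with $v$ and no edge with $u$ simultaneously, i.e.\ when $E(v) \cap E(w) = \emptyset$ and $E(u) \cap E(w) = \emptyset$ at once. Hence it suffices to show that no degree-$3$ vertex $w \neq (6,4)$ is ``missed'' (in the sense of having empty edge-intersection) by two distinct vertices of $\{(4,5),(5,5),(6,5)\}$ simultaneously.

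Next I would record, using Table~\ref{DTable}, that the degree-$3$ vertices of $\mathcal{H}$ are exactly $(4,3),(4,4),(5,3),(5,4),(6,3),(6,4)$, and then apply Claim~\ref{deg3_deg4} to list the ``miss-set'' of each degree-$4$ vertex. That claim gives that $(4,5)$ misses only $\{(4,3),(4,4)\}$, that $(6,5)$ misses only $\{(6,3),(6,4)\}$, and that $(5,5)$ misses only $\{(5,3),(5,4),(6,4)\}$.

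Finally I would intersect these miss-sets pairwise over the three possible choices of an unordered pair $\{v,u\}$. For $\{(4,5),(5,5)\}$ and for $\{(4,5),(6,5)\}$ the two miss-sets are disjoint, so no degree-$3$ vertex is missed by both members of the pair; for $\{(5,5),(6,5)\}$ the only commonly missed degree-$3$ vertex is $(6,4)$. Since the statement explicitly excludes $w = (6,4)$, in each of the three cases the vertex $w$ meets at least one of $v$ and $u$, giving $|(E(v) \cup E(u)) \cap E(w)| \geq 1$, as required.

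I do not expect any genuine obstacle here: the whole argument is the bookkeeping of three pairwise intersections of small explicit sets, and the exclusion of $(6,4)$ in the hypothesis is precisely what removes the single coincidence coming from the pair $\{(5,5),(6,5)\}$. The only point requiring care is the faithful transcription of the degree-$3$ vertices from Table~\ref{DTable} and of the three miss-sets from Claim~\ref{deg3_deg4}.
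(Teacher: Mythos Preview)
Your proposal is correct and takes essentially the same approach as the paper, which simply states that Claim~\ref{deg3_deg4} implies the result; you have just unpacked the pairwise intersections of the three ``miss-sets'' explicitly. The identification of $(6,4)$ as the unique common element (for the pair $\{(5,5),(6,5)\}$) matches the paper's remark following the claim.
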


    Note that $(6,4)$ is an exception in Claim \ref{degree_3} because $\big( E((5,5)) \cup E((6,5)) \big) \cap E((6,4)) = \emptyset$.\\
    
      \begin{claim}
        \label{deg_4B}
      If $C$ is a cover of $\mathcal{H}$ that contains one of the vertices $(4,5), (5,5)$ and $(5,6)$ then $|C| > 4$.\\
    \end{claim}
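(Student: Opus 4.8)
The plan is to argue by contradiction: assume $C$ is a cover of $\mathcal{H}$ with $|C| \le 4$ that contains at least one of the three degree-$4$ vertices $(4,5)$, $(5,5)$, $(6,5)$ coming from the ``single degree-$4$'' partitions. The first reduction is to invoke Claim~\ref{deg_4A}: if $C$ contained any degree-$4$ vertex lying in a partition with \emph{two} degree-$4$ vertices, we would already have $|C| > 4$. Hence we may assume every degree-$4$ vertex of $C$ belongs to $\{(4,5),(5,5),(6,5)\}$ and all remaining vertices of $C$ have degree at most $3$. I would then split into cases according to the number $t \in \{1,2,3\}$ of these special vertices contained in $C$.

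The case $t = 3$ is an immediate counting contradiction: a direct check shows $E((4,5)) \cup E((5,5)) \cup E((6,5))$ consists of exactly $8$ edges, leaving $5$ edges uncovered, which cannot be covered by the single remaining vertex (of degree at most $4$). For $t = 2$, Claim~\ref{deg_4_int} gives that the two special vertices cover at most $7$ edges, so at least $6$ edges remain. The two remaining vertices have degree at most $3$; by Claim~\ref{degree_3} every degree-$3$ vertex other than $(6,4)$ already meets the edges covered by the two special vertices and thus contributes at most $2$ new edges, while the lone exception $(6,4)$ contributes $3$ new edges only for the pair $\{(5,5),(6,5)\}$. In every configuration the two remaining vertices therefore cover at most $3 + 2 = 5 < 6$ new edges, a contradiction.

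The case $t = 1$ is the crux and will be the main obstacle. Here the single special vertex $v$ covers $4$ edges, leaving $9$ to be covered by three further vertices of degree at most $3$; equality then forces each of these three to be a degree-$3$ vertex whose edge set is disjoint from $E(v)$ and from the other two. Claim~\ref{deg3_deg4} pins down precisely which degree-$3$ vertices are disjoint from $E(v)$: for $v \in \{(4,5),(6,5)\}$ these are only the two degree-$3$ vertices in $v$'s own partition, so three such vertices simply do not exist. The delicate subcase is $v = (5,5)$, where there are exactly three candidates $(5,3)$, $(5,4)$, $(6,4)$; one is then forced to take all three, and the final step is to verify from Table~\ref{DTable} that their edge sets overlap (both $(5,4)$ and $(6,4)$ contain $E_5$), so that together they cover only $8$ distinct edges and miss $E_7$. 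This rules out the last case and completes the contradiction, giving $|C| > 4$.
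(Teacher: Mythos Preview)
Your proof is correct and follows essentially the same route as the paper's: reduce via Claim~\ref{deg_4A} so that every degree-$4$ vertex of $C$ lies in $\{(4,5),(5,5),(6,5)\}$, then split into the cases $t=3,2,1$ according to how many of these three are in $C$, using Claims~\ref{deg_4_int}, \ref{deg3_deg4}, and~\ref{degree_3} exactly as the paper does. Your treatment of the delicate subcase $v=(5,5)$ is in fact slightly more explicit than the paper's, since you pin down the overlap at $E_5$ and the uncovered edge $E_7$.
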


    \begin{proof}
      Let $C$ by a cover of $\mathcal{H}$ that contains one of the vertices in the set $\{(4,5), (5,5) ,(5,6)\}$ with $|C| \leq 4$. From Lemma \ref{deg_4A} it follows that if $C$ contains any of the vertices of degree $4$ that are not $(4,5), (5,5)$ and $(5,6)$, then $|C| \geq 5$, thus we can assume that $C$ doesn't contain any other vertex of degree $4$ that is not in the set $\{(4,5), (5,5), (5,6)\}$.
      
      Since $|E((4,5)) \cup E((5,5)) \cup E((6,5))| = |\{E_1, E_2, E_4, E_6, E_7, E_8, E_{10}, E_{13}\}| = 8$, if $C$ contains all three of $(4,5), (5,5)$ and $(5,6)$, it will need to contain at least two more vertices (since we excluded the possibility of it containing any more vertices of degree $4$), which will contradict $|C| \leq 4$. Therefore we can assume that $C$ contains some but not all of the three vertices $(4,5), (5,5)$ and $(6,5)$.

      Assume that $C$ contains exactly two distinct vertices $u$ and $v$ from the set $\{(4,5), (5,5), (5,6)\}$. By Claim~\ref{deg_4_int} $|E(u) \cap E(v)| \leq 7$. Since $C$ cannot contain any more vertices of degree $4$, it will need to contain at least two more vertices of degree $3$ to cover $\mathcal{H}$. However, from Claim~\ref{degree_3} we know that the only vertex of degree $3$ that cover three more hyperedges if included in $C$ with $v$ and $u$ is possibly $(6,4)$. This contradicts $C$ containing only four vertices. 
      
      Finally, we consider the case of $C$ containing only one vertex of degree four. Assume first that the only vertex of degree four contained in $C$ is $(4,5)$, then $C$ will need to contain at least three more vertices of degree $3$ to cover the rest of $\mathcal{H}$, moreover we need each vertex $w$ of these three vertices to satisfy the condition $w \cap E((4,5)) = \emptyset$. However, by Claim \ref{deg3_deg4} there is a maximum of only two vertices of degree $3$ that satisfy this condition, thus $(4,5)$ can't be the only vertex of degree $4$ in $C$. We can also see that the same reasoning applies to the case when the only vertex of degree $4$ contained in $C$ is $(6,5)$.

      The only remaining possibility is for the only vertex of degree $4$ contained in $C$ to be $(5,5)$. Again by the same reasoning as in the case $(4,5)$, and again by using Claim \ref{deg3_deg4} we conclude that the remaining three vertices in $C$ must be the vertices $(5,3), (5,4)$ and $(6,4)$. However, since $E((5,4)) \cap E((6,4)) \neq \emptyset$ this means that $(5,3), (5,4)$ and $(6,4)$ can't cover the nine remaining hyperedges in $\mathcal{H}$ that are uncovered by $(5,5)$, which contradicts $C$ being a cover $\mathcal{H}$. 

    \end{proof}

    Claim~\ref{deg_4B} and Claim~\ref{deg_4A} show that if $C$ is a cover of $\mathcal{H}$, and $|C| \leq 4$ then it cannot contain any vertex of degree four. However, four vertices of degree at most $3$ can cover at most $12$ hyperedges, which contradicts $C$ being a cover of $\mathcal{H}$.  

    \end{proof}

\end{document}